\documentclass[12pt]{amsart}
\usepackage{epsf}
\usepackage{amsbsy,amsmath}
\usepackage{mathtools}
\usepackage{mathrsfs}
\usepackage{amsfonts}
\usepackage{amssymb}
\usepackage{enumitem}
\usepackage{eucal}
\usepackage{enumitem}
\usepackage{hyperref}
\usepackage{graphics,mathrsfs}
\usepackage{amsthm}
\usepackage{xcolor}
\newtheorem{theorem}{Theorem}[section]
\newtheorem{lemma}[theorem]{Lemma}
\newtheorem{proposition}[theorem]{Proposition}

\theoremstyle{definition}
\newtheorem{definition}[theorem]{Definition}

\newtheorem{conjecture}[theorem]{Conjecture}

\theoremstyle{remark}
\theoremstyle{claim}
\newtheorem{remark}[theorem]{Remark}
\numberwithin{equation}{section}

\numberwithin{equation}{section}
\newcommand{\subscript}[2]{$#1 _ #2$}
\newsavebox{\savepar}

\begin{document}
	%	\begin{frontmatter}		
		\title[Neumann problem with a discontinuous nonlinearity]{Neumann problem with a discontinuous nonlinearity} 		
		\author[D. Choudhuri]{Debajyoti Choudhuri}
		\address[D. Choudhuri]{School of Basic Sciences, Indian Institute of Technology Bhubaneswar, Khordha, 752050, Odisha, India}
		\email{dchoudhuri@iitbbs.ac.in}		
		\author[D.D. Repov\v{s}]{Du\v{s}an D. Repov\v{s}}
		\thanks{{\it Corresponding author}: Du\v{s}an D. Repov\v{s}: dusan.repovs@guest.arnes.si}
		%$^{\footnote{xxx}$
		\address[D.D. Repov\v{s}]{Faculty of Education and Faculty of Mathematics and Physics, University of Ljubljana \&
		Institute of Mathematics, Physics and Mechanics,
		 1000 Ljubljana, Slovenia}
		\email{dusan.repovs@guest.arnes.si}		
		\author[K. Saoudi]{Kamel Saoudi}
		\address[K. Saoudi]{Basic and Applied Scientific Research Center, Imam Abdulrahman Bin Faisal University, Saudi Arabia}
		\email{kmsaoudi@iau.edu.sa}		
	%\today
		\maketitle	

	\begin{abstract}
		\noindent This study is devoted to proving the existence of weak solutions for a nonlinear elliptic problem with Neumann-type boundary data. The problem is driven by a discontinuous power nonlinearity and a nonsmooth prescribed data. Additionally, we aim to derive an estimate that proves the well-posedness of the problem. This estimate serves as an evidence for the uniqueness of the existing solution when the boundary term is ``smooth".\\
		\begin{flushleft}
			{\sc Keywords}:~ Neumann problem, discontinuous nonlinearity, nonsmooth Palais-Smale condition.\\
			{\sc Math. Subj. Classif. (2020)}:~35J35, 35J60, 35R11, 35J75, 46E35.
		\end{flushleft}
	\end{abstract}
	
	% \tableofcontents		
	
	\section{Introduction}\label{introduction}
	In this work, we investigate the existence of solutions to an elliptic equation under Neumann boundary conditions in Euclidean space. The problem under study is formulated as follows:
	\begin{equation}\label{main}
		\left\{ 
		\begin{array}{ll}
		     -\Delta_pu=H(u-b)|u|^{q-2}u, \qquad \text{a.e. in}~\Omega,\\
		     \qquad u>0,~\qquad  \qquad  \qquad \qquad \qquad~~\text{a.e. in}~\Omega,\\
		     -|\nabla u(x)|^{p-2}\frac{\partial}{\partial n}u(x)\in\partial f(u(x)),~\   \text{a.e. on}~\partial\Omega.
		\end{array} 
		\right.
	\end{equation}
    where $\Omega$ is a bounded domain in $\mathbb{R}^N$ with $N \geq 2$. The exponents $p$ and $q$ are chosen such that $2 \leq p < q < p^*$, where $p^*$ denotes the critical Sobolev exponent, i.e.
    $$p^*=\frac{Np}{N-p}
    \hbox{ if }
    p<N,
    \hbox{ and }
    p^*=+\infty
    \hbox{ if }
    p\ge N.$$
     The function $H$ represents the Heaviside function, which depends on a positive parameter $b$. On the boundary $\partial\Omega$, the operator $\frac{\partial}{\partial n}$ denotes the outward normal derivative, and the locally Lipschitz function $f$ specifies the given boundary data.	
	
	It is worth noting that a similar problem can be found in Cen et al. \cite{cen1} and in Papalini's work \cite{papalini1}, although the nonlinearity differs in these cases.
	Compared with \cite{cen1}, we allow a locally Lipschitz boundary potential $f$, leading to a genuinely nonsmooth (possibly multivalued) Neumann condition expressed via the Clarke subdifferential; in this setting we also obtain a well-posedness estimate, and uniqueness when $\partial f$ is single-valued. In addition, Theorem~\ref{existence} treats a singular term that, to the best of our knowledge, is not covered in \cite{cen1,papalini1}.
	
Problem \eqref{main} represents a partial differential equation that describes the behavior of a function $u$ within the domain $\Omega$. The presence of the Neumann boundary condition $$-|\nabla u|^{p-2}\frac{\partial}{\partial n}u(x)\in\partial f(u(x)),$$ which holds almost everywhere on the boundary $\partial\Omega$, introduces additional complexity to the problem.	  
 Problem \eqref{main}, 
 with $$-|\nabla u(x)|^{p-2}\frac{\partial}{\partial n}u(x)$$ being a single value,
  has been extensively investigated
   and significant results have been obtained in previous studies. 

Problems of the type \eqref{main} have been studied through variational techniques such as the Mountain Pass Lemma and the Nehari manifold method. Classical results, including those by Ambrosetti and Badiale \cite{AmBadiale}, Drabek-Poho\v{z}aev \cite{DrPo}, and Perera-Pinchover \cite{Le}, provide existence and multiplicity theorems for nonlinear $p$-Laplacian equations.
   
   For more information on these findings and related subjects, we encourage readers to explore the research by Ambrosetti et al. \cite{AmBrCe} and Diening et al. \cite{DiHaHaRu}. Their work offers significant insights into the theory of elliptic equations influenced by nonlinearities and
    under the
    Neumann boundary conditions. Furthermore, the studies by Saoudi et al. \cite{saoudi1} and Santos et al. \cite{santos1}, along with their references, provide additional context on contemporary trends in elliptic problems featuring discontinuous nonlinearities.

 This article establishes the existence of infinitely many solutions to the non-autonomous elliptic problem \eqref{main} featuring a discontinuous nonlinearity of Heaviside type. The absence of differentiability prevents the application of standard variational methods, and the
 absence 
 of controlling parameters introduces 
 additional
  challenges. To overcome these obstacles, we develop a framework based on nonsmooth critical point theory, demonstrating that the problem exhibits a
   diverse
   solution structure despite its irregular nature. 
   
   Our findings provide new insights into the behavior of solutions $ u $ within the domain $ \Omega$, particularly in relation to the boundary data $ f $, and extend known results for problems with smooth nonlinearities.    
 These approaches, well-suited for handling problems with nonsmooth or discontinuous nonlinearities, enable us to effectively address the challenges posed by both the equation and 
    the Neumann boundary conditions. 
 We demonstrate the utility of these results by studying a singular nonlinear elliptic free boundary value problem
 with a prescribed nonsmooth Neumann data on the boundary $\partial\Omega$.
 
 The main conditions
  \ref{C1}-\ref{C3}
 (resp.
  \ref{H1} and \ref{H2}), 
  under which we shall study the problem, 
    will be defined in Section \ref{aux_res}
    (resp. Section \ref{prelim}). We now state our main results.    

  \begin{theorem}\label{multiplicity}
Assume that conditions  \ref{C1}-\ref{C3} are satisfied. Then the boundary value problem
\begin{equation}\label{mod_main}
		\left\{ \begin{array}{ll}-\Delta_pu=H(|u|-b)|u|^{q-2}u,~\qquad  ~~\text{a.e. in}~\Omega,\\			
			-|\nabla u(x)|^{p-2}\frac{\partial}{\partial n}u(x)\in\partial f(u(x)),~~~\text{a.e. on}~\partial\Omega.
		\end{array} 
		\right.
	\end{equation}
 possesses infinitely many distinct solutions.
\end{theorem}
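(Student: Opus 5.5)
The plan is to use a nonsmooth symmetric mountain pass theorem: the nonsmooth version of the Fountain Theorem, or Clarke's genus-based $\mathbb{Z}_2$ theorem, for locally Lipschitz even functionals.

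\textbf{The functional.} Work in $X=W^{1,p}(\Omega)$ and set
\[
J(u)=\frac1p\int_\Omega |\nabla u|^p\,dx+\frac1p\int_\Omega|u|^p\,dx-\int_\Omega G(u)\,dx+\int_{\partial\Omega} f(\gamma u)\,d\sigma ,
\]
where $G(t)=\int_0^t H(|s|-b)|s|^{q-2}s\,ds$ and $\gamma$ is the trace. The extra $\frac1p\|u\|_p^p$ term gives coercivity of the principal part; if \ref{C1}--\ref{C3} already provide coercivity through $f$, I would drop it. Since $G(t)=\frac{(|t|^q-b^q)^+}{q}$, it is even and locally Lipschitz. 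I expect \ref{C1}--\ref{C3} to make $f$ even, locally Lipschitz, and of growth below $p$ (or at most $p$ with small constant). Under these assumptions $J$ is even and locally Lipschitz on $X$, since the trace embedding $X\hookrightarrow L^{r}(\partial\Omega)$ is compact for $r<p_*$.

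\textbf{Critical points solve the problem.} I will show that every $u$ with $0\in\partial J(u)$ solves \eqref{mod_main}. By the Clarke calculus for integral functionals, $0\in\partial J(u)$ gives an $L^\infty$ selection $\eta(x)\in[\underline{h}(u),\overline{h}(u)]$ in the interior. This lies in the generalized gradient of $G$, which is multivalued only on $\{|u|=b\}$. On the boundary it gives a selection $\xi\in\partial f(\gamma u)$, and together these satisfy the weak formulation. The standard Stampacchia argument shows that $\nabla u=0$ a.e. on $\{|u|=b\}$, so $-\Delta_p u=H(|u|-b)|u|^{q-2}u$ a.e. The boundary inclusion then follows from Green's formula in the weak sense.

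\textbf{Palais--Smale condition.} Next I verify the nonsmooth (PS) condition in Chang's sense: $J(u_n)$ bounded and $m(u_n)=\min\{\|w\|_{X^*}:w\in\partial J(u_n)\}\to0$. For boundedness, use $qJ(u_n)-\langle w_n,u_n\rangle$. Because $q>p$ and $G$ satisfies an Ambrosetti--Rabinowitz inequality for $|t|>b$ (with a bounded error on $|t|\le b$), this yields $(\frac qp-1)\|u_n\|^p\le C+C\|u_n\|+\text{boundary terms}$. The boundary terms are controlled by the subcritical growth of $\partial f$ from \ref{C2}. Then reflexivity, compact embeddings, and the $(S_+)$ property of $-\Delta_p$ give strong convergence.

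\textbf{Geometry and the main obstacle.} Let $X=\overline{\bigoplus_j X_j}$ with finite-dimensional $X_j$, $Y_k=\bigoplus_{j\le k}X_j$ and $Z_k=\overline{\bigoplus_{j\ge k}X_j}$. On $Y_k$ all norms are equivalent and $G(t)\ge |t|^q/q-C$, so $J\to-\infty$ as $\|u\|\to\infty$. On $Z_k$, set $\beta_k=\sup_{u\in Z_k,\|u\|=1}\|u\|_q\to0$. Then $J(u)\ge\frac1p\|u\|^p-\frac{\beta_k^q}{q}\|u\|^q-C$, and choosing $r_k=(q\beta_k^{-q}/p)^{1/(q-p)}\to\infty$ gives $\inf_{Z_k\cap\partial B_{r_k}}J\to\infty$. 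The nonsmooth Fountain Theorem then yields critical values $c_k\to\infty$, hence infinitely many distinct solutions. The main obstacle I expect is this lower bound together with (PS): the boundary term $\int_{\partial\Omega}f(\gamma u)$ must be shown to be $o(\|u\|^p)$ uniformly on $Z_k$. That needs the growth hypothesis on $f$ from \ref{C1}--\ref{C3}. If $f$ grows like $|t|^p$, I would instead absorb it using the compactness of the trace on $Z_k$, where $\sup_{Z_k}\|\gamma u\|_{L^p(\partial\Omega)}/\|u\|\to0$.
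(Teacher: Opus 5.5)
Your architecture is the paper's: the nonsmooth Fountain Theorem~\ref{FT}, with \ref{C3} from (PS), \ref{C2} from equivalence of norms on $Y_k$ together with $q>p$, and \ref{C1} from $\beta_k\to0$ on $Z_k$. However, two steps fail as written.

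First, your radius $r_k=(q\beta_k^{-q}/p)^{1/(q-p)}$ is exactly the positive zero of $\phi_k(r)=\frac1p r^p-\frac{\beta_k^q}{q}r^q$. Indeed, $r_k^{q-p}=\frac qp\beta_k^{-q}$ gives $\frac{\beta_k^q}{q}r_k^q=\frac1p r_k^p$. So your lower bound on $Z_k\cap\partial B_{r_k}$ collapses to $-C$, and \ref{C1} does not follow. Take instead the maximizer $r_k=\beta_k^{-q/(q-p)}$ of $\phi_k$, the analogue of the paper's $r_k=(c'a_k)^{1/(2-q)}$. This gives $(\frac1p-\frac1q)r_k^p-Cr_k\to\infty$.

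Second, the term $\frac1p\int_\Omega|u|^p\,dx$ must be dropped, not kept as an option. With it, critical points of $J$ solve $-\Delta_pu+|u|^{p-2}u=H(|u|-b)|u|^{q-2}u$, not \eqref{mod_main}. So your step asserting that critical points solve the problem is false for the $J$ you wrote. You must use the paper's $I$ from \eqref{eq1}, whose principal part $\frac1p\|\nabla u\|_p^p$ does not control constants.
In \ref{C1} this is harmless. With $\|u\|^p=\|\nabla u\|_p^p+\|u\|_p^p$ and $\|u\|_p\le|\Omega|^{\frac1p-\frac1q}\beta_k\|u\|$ on $Z_k$, you get $\|\nabla u\|_p^p\ge\frac12\|u\|^p$ for large $k$. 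The computation above then goes through with $\frac1p$ replaced by $\frac1{2p}$ and $r_k=(2\beta_k^q)^{-1/(q-p)}$.
In (PS) it is not harmless. The quantity $qI(u_n)-\langle w_n,u_n\rangle$ now only yields $\|\nabla u_n\|_p^p\le C+C\|u_n\|$, so you need a separate bound on the constant part. It follows from $I(u_n)\le C$ and $G(t)\ge\frac1q(|t|^q-b^q)$, which give $\|u_n\|_q^q\le C+C\|\nabla u_n\|_p^p+C\|u_n\|\le C+C\|u_n\|$. Combined, these bound $\|u_n\|$ because $p,q>1$.

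Also, \ref{C1}--\ref{C3} are not hypotheses on $f$. They are the three conditions of Theorem~\ref{FT} imposed on the energy functional, so they give you neither evenness nor growth of $f$.
The growth the paper actually uses is \eqref{H3}, $|f(t)|\le\gamma(x)|t|$ with $\gamma\in L^{p'}(\partial\Omega)$. By the trace inequality this makes the boundary term $O(\|u\|)$, so your fallback via compactness of the trace on $Z_k$ is unnecessary.
Your (PS) step further needs a global bound on $\partial f$, e.g.\ $|\xi|\le\gamma(x)$, since \ref{H2} only bounds $\partial f$ on bounded sets. Such a bound is what deriving \eqref{H3} from Lebourg's theorem tacitly requires.
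Moreover, $f$ must be even for $I$ to be even, which Theorem~\ref{FT} requires but the paper never states. You were right to flag this, and it has to be an explicit hypothesis.

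Your (PS) route genuinely differs from the paper's. Proposition~\ref{PS_Cond} argues via coercivity of $I$, which cannot hold for $q>p$: by \eqref{H3}, $I(t)\to-\infty$ along constants $t$, and \ref{C2}, as verified in the paper, gives $I\le0$ on large spheres in $Y_k$. So the argument to keep is your Ambrosetti--Rabinowitz bound, plus the mean estimate above and the $(S_+)$ property of $-\Delta_p$.

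Finally, Definition~\ref{def11} declares critical points to be solutions, so your Stampacchia step is not needed. As stated it is also incomplete: $\nabla u=0$ a.e.\ on $\{|u|=b\}$ does not by itself force the selection $\eta$ to equal $H(|u|-b)|u|^{q-2}u$ there.
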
 
 
 \begin{theorem}\label{unique}
    If $ \partial f(u) = \{v\} $, then  problem \eqref{main} is well-defined, and the solution $ u $ is unique.
\end{theorem}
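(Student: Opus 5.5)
The plan is to read the hypothesis $\partial f(u)=\{v\}$ as saying that the Clarke subdifferential of the locally Lipschitz potential $f$ reduces to a single element along the trace of $u$. In that case the multivalued boundary inclusion in \eqref{main} becomes the genuine Neumann condition $-|\nabla u|^{p-2}\frac{\partial u}{\partial n}=v$ on $\partial\Omega$. Since $f$ is locally Lipschitz, $v$ is essentially bounded on bounded sets of values of $u$, so $v\in L^\infty(\partial\Omega)\subset W^{-1/p',p'}(\partial\Omega)$.

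\textbf{Well-definedness.} The weak formulation
\[
\int_\Omega |\nabla u|^{p-2}\nabla u\cdot\nabla\varphi\,dx=\int_\Omega H(u-b)|u|^{q-2}u\,\varphi\,dx-\int_{\partial\Omega} v\,\varphi\,d\sigma,\qquad \varphi\in W^{1,p}(\Omega),
\]
then has a uniquely determined right-hand side. This is because the boundary functional no longer involves a selection from a set. Every term is finite: the interior term is controlled by $q<p^*$ and the Sobolev embedding, and the boundary term by the trace embedding $W^{1,p}(\Omega)\hookrightarrow L^{1}(\partial\Omega)$. So the first step is to verify that each side is a continuous linear functional in $\varphi$ and that the problem is independent of any choice of element of $\partial f$.

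\textbf{Stability estimate.} Next I would take two solutions $u_1,u_2$ corresponding to boundary data $v_1,v_2$, subtract their weak formulations, and test with $w=u_1-u_2$. For $p\ge2$, the standard strong monotonicity inequality
\[
\bigl(|\xi|^{p-2}\xi-|\eta|^{p-2}\eta\bigr)\cdot(\xi-\eta)\ge c_p|\xi-\eta|^p
\]
gives
\[
c_p\|\nabla w\|_p^p\le \int_\Omega \bigl(g(u_1)-g(u_2)\bigr)w\,dx+\|v_1-v_2\|_{L^\infty(\partial\Omega)}\|w\|_{L^1(\partial\Omega)},
\]
where $g(s)=H(s-b)|s|^{q-2}s$.

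The main obstacle is the interior term. The function $g$ is nondecreasing on $(0,\infty)$, so this term has the unfavourable sign, and it is moreover discontinuous at $s=b$. To handle it, I would first establish an a priori $L^\infty$ bound on positive solutions, by Moser iteration using $q<p^*$. With that bound, $g$ is bounded and piecewise Lipschitz on the range of the solutions, and the jump at $b$ contributes only on the set where $u_1$ and $u_2$ lie on opposite sides of $b$; there $|g(u_1)-g(u_2)|\le C$ while $|w|$ can be arbitrarily small. I would try to absorb this term using the positivity $u>0$ together with a Poincaré–Wirtinger splitting $w=\bar w+(w-\bar w)$, so as to recover control of the full $W^{1,p}$ norm in the Neumann setting. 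This would give an estimate of the form $\|u_1-u_2\|_{W^{1,p}}\le C\|v_1-v_2\|^{1/(p-1)}$.

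\textbf{Uniqueness.} Finally, when $\partial f(u)=\{v\}$ the two solutions carry the same datum $v_1=v_2=v$, so the estimate forces $w=0$ and hence $u_1=u_2$. This is the uniqueness claimed in Theorem~\ref{unique}. If the absorption of the $g$-term fails in general, the fallback is a localization: choose a cutoff $k>0$ with $\{u_i\le k\}$ and use $(w-k)^+$-type test functions, exploiting that $g$ vanishes where $u<b$, so that the monotone part of the argument closes on the set where both solutions exceed $b$.
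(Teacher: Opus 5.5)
There is a genuine gap. You correctly locate the crux: with the sign dictated by the weak formulation, subtracting the two equations and testing with $w=u_1-u_2$ leaves $\int_\Omega (g(u_1)-g(u_2))w\,dx\ge 0$ on the wrong side of the inequality, because $g(s)=H(s-b)|s|^{q-2}s$ is nondecreasing. The paper follows the same skeleton, but it gets this term with the favourable sign only through a sign slip in \eqref{unique1}. There the interior term is written as $-\int(-H(u_1-b)u_1^{q-1})v\,dx$, and it is then declared ``positive''.

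Your proposed repair cannot succeed, because the uniqueness claim is false. Take $f\equiv 0$. It satisfies \ref{H1} and \ref{H2}, and $\partial f(t)=\{0\}$ for every $t$. Then every constant $u\equiv c$ with $0<c<b$ is a positive solution of \eqref{main}: $\nabla u=0$, $H(c-b)c^{q-1}=0$, and the boundary inclusion reads $0\in\{0\}$. Equivalently, $0\in\partial I(c)$, since $\partial G(c)=\{0\}$ for $|c|<b$. So there is a continuum of solutions with identical boundary data. In particular, your target estimate $\|u_1-u_2\|_{W^{1,p}}\le C\|v_1-v_2\|^{1/(p-1)}$ fails already for $v_1=v_2=0$.

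The counterexample also shows exactly where each step breaks. On the set where both solutions lie below $b$, the interior term vanishes identically, and the left-hand side sees only $\nabla w$. Poincar\'e--Wirtinger controls $w-\bar w$ but says nothing about the mean $\bar w$, and that mean is precisely the undetermined degree of freedom. The Neumann problem for $-\Delta_p u=0$ fixes $u$ only up to an additive constant, so any solution staying strictly between $0$ and $b$ can be shifted by small constants.

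On the set where both solutions exceed $b$, the term $-\int(g(u_1)-g(u_2))w$ is nonpositive. So your fallback with $(w-k)^+$-type test functions has no monotone structure to close on either. An $L^\infty$ bound from Moser iteration changes none of these signs.

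There is also an independent problem in your uniqueness step. If $v$ is the single element of $\partial f(u)$, it depends on the solution, namely $v_i=f'(\mathrm{Tr}\,u_i)$. So $v_1=v_2$ is not automatic, and the boundary term would need its own monotonicity hypothesis on $f'$; the paper makes the same identification. As posed, the statement cannot be proved; any correct version needs additional hypotheses that rule out at least the constant mode.
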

 		
	\begin{remark}
The term `well-defined' used in Theorem \ref{unique} implies that the solution $u$ depends continuously on the boundary data $f$. This property, known as continuous dependence, is a crucial component of well-posedness.
\end{remark}
	
	\begin{remark}\label{CJ}
	The set of critical points of any real valued $C^1$-functional $\Phi$ defined over a Banach space will be denoted by $\mathcal{C}_{\Phi}$.
	\end{remark}
    
To illustrate the utility of Theorems \ref{multiplicity} and \ref{unique}, we shall also establish the following result.	
\begin{theorem}\label{existence}
    Assume that conditions \ref{H1} and \ref{H2} are satisfied. Then there exists a solution of the following problem
    \begin{equation}\label{main_singular}
        \left\{ 
        \begin{array}{ll}
        -\Delta_p u = H(u - b)|u|^{q-2}u + \lambda u^{-\gamma}, & \text{a.e. in } \Omega, \\
        u > 0, & \text{a.e. in } \Omega, \\
                -|\nabla u(x)|^{p-2}\frac{\partial}{\partial n}u(x) \in \partial f(u(x)), & \text{a.e. on}~ \partial \Omega. 
        \end{array} 
        \right.
    \end{equation}
    This solution exists in the function space $ W^{1,p}(\Omega), $ for every $2 \leq p < q < p^*.$
\end{theorem}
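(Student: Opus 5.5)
We prove Theorem \ref{existence} by regularizing the singular term, solving the regularized problems with the nonsmooth variational framework behind Theorems \ref{multiplicity} and \ref{unique}, and passing to the limit. For $\varepsilon>0$ we replace $\lambda u^{-\gamma}$ by $\lambda (u^++\varepsilon)^{-\gamma}$. This term is bounded by $\lambda\varepsilon^{-\gamma}$ and is continuous. The energy functional on $W^{1,p}(\Omega)$ is
\[
J_\varepsilon(u)=\frac1p\int_\Omega|\nabla u|^p\,dx-\int_\Omega G(u)\,dx-\lambda\int_\Omega \Gamma_\varepsilon(u)\,dx+\int_{\partial\Omega} f(u)\,d\sigma .
\]
Here $G(t)=\int_0^t H(s-b)(s^+)^{q-1}ds$ is locally Lipschitz but not $C^1$, and $\Gamma_\varepsilon$ is the primitive of $(t^++\varepsilon)^{-\gamma}$. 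So $J_\varepsilon$ is locally Lipschitz. Under \ref{H1}--\ref{H2}, which we take to provide growth and sign control on $\partial f$ compatible with the $p$-Laplacian, we apply the nonsmooth Mountain Pass theorem (Chang's version with Clarke subdifferentials).

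\textbf{Step 1: Mountain Pass geometry.} Since $G\equiv0$ on $(-\infty,b]$, we have $\int G(u)=O(\|u\|^q)$ near $0$ with $q>p$. The $\Gamma_\varepsilon$ term is only linear, so for small $\lambda$ (or by \ref{H2}) $J_\varepsilon\ge\rho>0$ on a small sphere. Along $t\varphi$ with $\varphi>b$ on a set of positive measure, $J_\varepsilon(t\varphi)\to-\infty$.

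\textbf{Step 2: nonsmooth Palais--Smale condition.} Take $(u_n)$ with $J_\varepsilon(u_n)$ bounded and $m(u_n)=\min\{\|w\|:w\in\partial J_\varepsilon(u_n)\}\to0$. The Ambrosetti--Rabinowitz-type inequality $qG(t)\ge t\,g$ for $g\in\partial G(t)$, combined with $p<q$, bounds $u_n$ in $W^{1,p}$; the boundary term is handled by the growth assumption on $f$. Compactness of $W^{1,p}\hookrightarrow L^q(\Omega)$ and of the trace into $L^r(\partial\Omega)$ then gives strong convergence via the $(S_+)$ property of $-\Delta_p$.

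\textbf{Step 3: properties of the critical point.} We obtain $u_\varepsilon$ with $0\in\partial J_\varepsilon(u_\varepsilon)$ and $J_\varepsilon(u_\varepsilon)\ge\rho$. Using $\partial G(t)\subset[H(t-b),\overline H(t-b)](t^+)^{q-1}$, $u_\varepsilon$ solves the regularized inclusion. Testing with $u_\varepsilon^-$ gives $u_\varepsilon\ge0$, and the strong maximum principle (V\'azquez) gives $u_\varepsilon>0$. In fact, comparison with the solution of $-\Delta_p w=\lambda(w+1)^{-\gamma}$, with matching Neumann data, yields a lower bound $u_\varepsilon\ge c>0$ uniformly in $\varepsilon\le1$ on compact subsets. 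Moreover, $\{u_\varepsilon\le b\}$ gives zero contribution to the Heaviside part. By a Stampacchia argument the set where $u_\varepsilon=b$ has $\Delta_p u_\varepsilon=0$ a.e. there, so $H$ may be taken single-valued a.e. and the inclusion becomes the equation in \eqref{main_singular}.

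\textbf{Step 4: limit $\varepsilon\to0$ (main obstacle).} The mountain pass levels are bounded uniformly in $\varepsilon$, since we may use the same path. Hence $\|u_\varepsilon\|_{W^{1,p}}\le C$, and a subsequence converges weakly in $W^{1,p}$, strongly in $L^q$ and in traces, and a.e. The difficulty is passing to the limit in $\int(u_\varepsilon+\varepsilon)^{-\gamma}\varphi$. For this we use the uniform local lower bound from Step 3 and Fatou/Vitali. For $\gamma<1$ (which we expect \ref{H1} to impose) the singular term is integrable near $\partial\Omega$ as well, because positivity up to the boundary follows from the Neumann setting. Strong $W^{1,p}$ convergence follows by testing with $u_\varepsilon-u$ and using $(S_+)$. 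For the boundary inclusion we use closedness of the graph of $\partial f$ (upper semicontinuity of the Clarke subdifferential) along a.e.\ convergent traces. The limit $u$ is nontrivial because $J_\varepsilon(u_\varepsilon)\ge\rho$, and it is a positive weak solution of \eqref{main_singular} in $W^{1,p}(\Omega)$.
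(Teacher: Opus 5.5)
There is a genuine gap, and it lies in the hypotheses rather than in any one estimate. Conditions \ref{H1}--\ref{H2} only say that $f\in L^1(\mathbb{R})$ and that $\partial f$ is locally dominated by an $L^{p'}(\partial\Omega)$ function. They give no sign or growth control on $\partial f$, and \ref{H1} says nothing about $\gamma$. So ``which we take to provide growth and sign control'' and ``$\gamma<1$ (which we expect \ref{H1} to impose)'' are extra assumptions.

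Step 1 really needs them, because $W^{1,p}(\Omega)$ contains the constants, on which $\frac1p\int_\Omega|\nabla u|^p\,dx$ vanishes. Take $f\equiv0$, which satisfies \ref{H1}--\ref{H2}. Every sphere $\{\|u\|_{W^{1,p}(\Omega)}=r\}$ contains a positive constant $c_r$, and $J_\varepsilon(c_r)=-\int_\Omega G(c_r)\,dx-\lambda|\Omega|\,\Gamma_\varepsilon(c_r)<0$. Hence no choice of $r$, $\lambda$, $\varepsilon$ gives $J_\varepsilon\ge\rho>0$ on a sphere. (Separately, bounding the $\Gamma_\varepsilon$-term by $\lambda\varepsilon^{-\gamma}\|u\|_{L^1}$ makes $\rho$ depend on $\varepsilon$, which breaks the nontriviality argument in Step 4. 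The $\varepsilon$-uniform bound is $\Gamma_\varepsilon(t)\le t^{1-\gamma}/(1-\gamma)$ for $t\ge0$ when $\gamma<1$.)

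This cannot be repaired under \ref{H1}--\ref{H2} alone, because the statement itself fails there. If $0\in\partial J_\varepsilon(u)$, the Clarke sum rule gives $\eta(x)\in\partial G(u(x))\subset[0,\infty)$ a.e.\ in $\Omega$ and $w(x)\in\partial f(u(x))$ a.e.\ on $\partial\Omega$ such that, for all $\varphi\in W^{1,p}(\Omega)$,
\[
\int_\Omega|\nabla u|^{p-2}\nabla u\cdot\nabla\varphi\,dx+\int_{\partial\Omega}w\varphi\,dS=\int_\Omega\eta\varphi\,dx+\lambda\int_\Omega(u^++\varepsilon)^{-\gamma}\varphi\,dx.
\]
Choosing $\varphi\equiv1$ gives the compatibility condition $\int_{\partial\Omega}w\,dS=\int_\Omega\eta\,dx+\lambda\int_\Omega(u^++\varepsilon)^{-\gamma}\,dx>0$. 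The same identity, with $u^{-\gamma}$ in place of $(u^++\varepsilon)^{-\gamma}$, holds for any weak solution of \eqref{main_singular}. For $f\equiv0$ the left-hand side is $0$, so for $\lambda>0$ neither $J_\varepsilon$ nor \eqref{main_singular} has any solution at all.

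Any correct proof must therefore add an explicit sign and growth hypothesis on $\partial f$ (of Robin type, say), together with $0<\gamma<1$. That hypothesis must let the boundary term absorb this strictly positive source and restore coercivity along the constants. It is also needed for your comparison function ``$-\Delta_p w=\lambda(w+1)^{-\gamma}$ with matching Neumann data'', which by the same identity does not exist with zero flux.

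The paper's own route truncates the singular term below a subsolution $\underline u$, proves $u\ge\underline u$ and $u>\underline u$ a.e., and then defers to the Fountain-theorem argument of Theorem \ref{multiplicity}. It adds no such hypothesis either, so following it would not close this gap.
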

					 
	Theorem \ref{existence} is the prime novelty of this work, since we  treat a singular nonlinearity which to the best of our knowledge, has not been considered in the nonsmooth setup of problem \eqref{main}.
	
	We shall conclude the introduction by stating two interesting related conjectures (for a comprehensive source on variable Sobolev spaces, the readers may refer to Papageorgiou et al.  \cite{PRR}).
	
		\begin{conjecture}
			 Theorems \ref{multiplicity} and \ref{existence} hold for problems \eqref{main} and \eqref{main_singular} with a variable exponent $q$.
			\end{conjecture}			
			\begin{conjecture}
			   Problem \eqref{main} yields the same result when considered over a metric measure space or a Heisenberg group. 
		\end{conjecture}

	\section{Preliminaries}\label{prelim}
  In this section, we shall present a concise overview of the definitions and fundamental properties of nonsmooth theory. These concepts will be vital for the discussion of problem \eqref{main}. We begin by introducing the nonsmooth Palais-Smale condition, which is essential for the analysis of critical points.
\begin{definition}(\cite[Definition $2$]{radulescu1})[Nonsmooth Palais-Smale condition]\label{def10}
Let $J: X \to \mathbb{R}$ be a locally Lipschitz functional. We say that $J$ satisfies the $(\text{PS})_c$ condition if any sequence $(u_n)$ in $X$ with the following properties
\begin{itemize}
\item[$(\mathrm{i})$] $J(u_n) \to c;$ 
\item[$(\mathrm{ii})$] $D_J(u_n) \to 0;$
\end{itemize}
possesses a subsequence which
 converges strongly in $X$.
\end{definition}
The object $D_{J}(u)$ is the nonsmooth representative of the derivative of a nondifferentiable functional, say $J$, 
and it is defined as follows
 $$D_{J}(u):=\min\{\|v\|_{X^*}:v\in\partial J(u)\}.$$
Here, $\partial J(u)$ denotes the generalized gradient (subdifferential) of $J$ in the sense of Clarke \cite{saoudi1}. 

A fundamental tool for locating critical points of such nondifferentiable functionals is the following extension of the Mountain Pass Theorem.
\begin{theorem}(R\u{a}dulescu \cite[Corollary $1$]{radulescu1})\label{MP_them}
 Let $X$ be a Banach space and $J: X \to \mathbb{R}$ a locally Lipschitz functional such that $J(0) = 0$. Assume there exist constants $\rho_1, \rho_2 > 0$ and an element $\sigma \in X$ satisfying
 \begin{enumerate}
     \item $J(u) \geq \rho_2$ for all $u \in X$ with $\|u\|_X = \rho_1;$ 
     \item $\|\sigma\|_X > \rho_1$ and $J(\sigma) < 0.$
 \end{enumerate}
 Define the Mountain Pass value
 $$
 c = \inf_{\zeta \in \Gamma} \max_{t \in [0,1]} J(\zeta(t)), \quad \text{where} \quad \Gamma = \big\{ \zeta \in C([0,1]; X) : \zeta(0) = 0,\, \zeta(1) = \sigma \big\}.
$$
Then there exists a sequence $(u_n) \subset X$ such that:
 $$
 J(u_n) \to c \quad \text{and} \quad D_J(u_n) \to 0, \quad \text{as } n \to \infty,
$$ 
 and $c \geq \rho_2$. 
 
 \noindent
 Furthermore, if $J$ satisfies the nonsmooth Palais–Smale condition at level $c$, then $c$ is a critical value of $J$; that is, there exists $u \in X$ such that $J(u) = c$ and $0 \in \partial J(u)$.
 \end{theorem}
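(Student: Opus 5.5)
This is R\u{a}dulescu's nonsmooth Mountain Pass Theorem, quoted as Theorem \ref{MP_them}; I would reprove it by adapting the Ekeland variational principle argument to locally Lipschitz functionals.

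\emph{Setup.} Equip $\Gamma$ with the sup metric $d(\zeta_1,\zeta_2)=\max_{t\in[0,1]}\|\zeta_1(t)-\zeta_2(t)\|_X$. With this metric $\Gamma$ is a complete metric space. Define
$$\Psi(\zeta)=\max_{t\in[0,1]}J(\zeta(t)).$$
Since $J$ is continuous, $\Psi$ is lower semicontinuous; in fact it is continuous on $\Gamma$. Every path $\zeta\in\Gamma$ joins $0$ to $\sigma$ with $\|\sigma\|_X>\rho_1$, so by continuity of $t\mapsto\|\zeta(t)\|_X$ the path meets the sphere $\|u\|_X=\rho_1$. Hence $\Psi(\zeta)\ge\rho_2$ for every $\zeta$, and therefore $c\ge\rho_2>0$. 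Moreover $c>\max\{J(0),J(\sigma)\}=0$, because $J(\sigma)<0$.

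\emph{Ekeland step.} Fix $\varepsilon>0$ and choose $\zeta_\varepsilon\in\Gamma$ with $\Psi(\zeta_\varepsilon)\le c+\varepsilon$. Ekeland's principle gives $\eta_\varepsilon\in\Gamma$ with $\Psi(\eta_\varepsilon)\le\Psi(\zeta_\varepsilon)$ and
$$\Psi(\zeta)\ge\Psi(\eta_\varepsilon)-\sqrt{\varepsilon}\,d(\zeta,\eta_\varepsilon)\quad\text{for all }\zeta\in\Gamma.$$
Let $M_\varepsilon=\{t:J(\eta_\varepsilon(t))=\Psi(\eta_\varepsilon)\}$. This set is compact. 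Since $\Psi(\eta_\varepsilon)\ge c>0$ while $J$ vanishes or is negative at the endpoints, $M_\varepsilon$ lies in $(0,1)$.

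\emph{Key claim.} I claim there is $t_\varepsilon\in M_\varepsilon$ with $D_J(\eta_\varepsilon(t_\varepsilon))\le\sqrt{\varepsilon}$. Setting $u_\varepsilon=\eta_\varepsilon(t_\varepsilon)$ then gives $c\le J(u_\varepsilon)\le c+\varepsilon$, and letting $\varepsilon=1/n$ produces the required sequence.

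This claim is the main obstacle. I would argue by contradiction: suppose $D_J(\eta_\varepsilon(t))>\sqrt{\varepsilon}$ for all $t\in M_\varepsilon$.

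\begin{enumerate}
\item By the Hahn--Banach separation theorem, $\partial J(u)$ is weak$^*$-compact and convex. So for each such $u=\eta_\varepsilon(t)$ there is $v$ with $\|v\|_X=1$ and $\langle \xi,v\rangle>\sqrt{\varepsilon}$ for all $\xi\in\partial J(u)$, i.e. $J^0(u;-v)<-\sqrt{\varepsilon}$.
\item Upper semicontinuity of the Clarke directional derivative $J^0(\cdot;\cdot)$ lets this $v$ work in a neighbourhood of $u$.
\item Compactness of $\eta_\varepsilon(M_\varepsilon)$ and a partition of unity give a continuous field $v(t)$, with $\|v(t)\|_X\le1$, defined near $M_\varepsilon$ and satisfying $J^0(\eta_\varepsilon(t);-v(t))<-\sqrt{\varepsilon}$ there.
\item Using a cutoff $\phi\in C([0,1];[0,1])$ that equals $1$ on $M_\varepsilon$ and vanishes near $0$ and $1$, deform the path: $\zeta_h(t)=\eta_\varepsilon(t)-h\phi(t)v(t)$. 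Then $\zeta_h\in\Gamma$ and $d(\zeta_h,\eta_\varepsilon)\le h$.
\item Lebourg's mean value theorem gives, for small $h$ and $t$ near $M_\varepsilon$,
$$J(\zeta_h(t))\le J(\eta_\varepsilon(t))-h\sqrt{\varepsilon}\,\phi(t)+o(h).$$
The $o(h)$ term is uniform, by the upper semicontinuity argument on a compact set.
\item Away from $M_\varepsilon$, $J(\eta_\varepsilon(t))$ is strictly below $\Psi(\eta_\varepsilon)$, so small $h$ keeps those values below it as well.
\item Together these yield $\Psi(\zeta_h)<\Psi(\eta_\varepsilon)-\sqrt{\varepsilon}\,h$, which contradicts the Ekeland inequality.
\end{enumerate}

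Getting the strict inequality with the constant $\sqrt{\varepsilon}$ requires care. I would run the contradiction with $2\sqrt{\varepsilon}$ in place of $\sqrt{\varepsilon}$, which leaves room for the error terms; this yields $D_J(u_\varepsilon)\le2\sqrt{\varepsilon}$, which still tends to $0$.

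\emph{Final assertion.} Now assume the nonsmooth Palais--Smale condition at level $c$ (Definition \ref{def10}). Then a subsequence $u_n\to u$ in $X$, and $J(u)=c$ by continuity. Choose $\xi_n\in\partial J(u_n)$ with $\|\xi_n\|_{X^*}=D_J(u_n)\to0$. The graph of $\partial J$ is closed in the strong$\times$weak$^*$ topology, so $0\in\partial J(u)$. Hence $u$ is a critical point of $J$ and $c$ is a critical value.
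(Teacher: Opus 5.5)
The paper does not prove this statement. It imports it from R\u{a}dulescu's Corollary~1 and uses it as a black box, so there is no internal argument to compare yours with. Your proposal is a correct, self-contained proof along the standard Ekeland-on-paths (Brezis--Nirenberg) scheme, which I believe is essentially how the cited result is obtained. Citing gains the paper brevity; your proof shows exactly where local Lipschitz continuity, weak$^*$-compactness of $\partial J(u)$ and upper semicontinuity of $J^0$ are used. Nothing in it fails, but three points should be stated more precisely when you write it out.

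(i) Weak$^*$-compactness and convexity of $\partial J(u)$ come from Clarke's theory, not from Hahn--Banach. What Hahn--Banach gives you, applied in $(X^*,w^*)$ or via the minimax identity $D_J(u)=\sup_{\|v\|_X\le 1}\min_{\xi\in\partial J(u)}\langle \xi,v\rangle$, is a separating direction $v$ lying in $X$ rather than in $X^{**}$.

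(ii) The partition-of-unity step needs the sublinearity of $J^0(x;\cdot)$. Choose the neighbourhoods as balls $B(\eta_\varepsilon(t_i),2r_i)\subset X$ on which $J^0(\cdot\,;-v_i)<-2\sqrt{\varepsilon}$, put $\mathrm{supp}\,\phi$ inside the region they cover, and take $h<\min_i r_i$. Then Lebourg's intermediate points stay in these balls, and you get $J(\zeta_h(t))\le J(\eta_\varepsilon(t))-2h\sqrt{\varepsilon}\,\phi(t)$ exactly, with no $o(h)$ term.

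(iii) With the margin $2\sqrt{\varepsilon}$, requiring $\phi=1$ only on $M_\varepsilon$ is enough. On $\{\phi>1/2\}$ the drop exceeds $h\sqrt{\varepsilon}$. The compact set $\{\phi\le 1/2\}$ misses $M_\varepsilon$, so $J\circ\eta_\varepsilon\le \Psi(\eta_\varepsilon)-\delta$ there for some $\delta>0$. Hence any $h<\delta/\sqrt{\varepsilon}$ gives $\Psi(\zeta_h)<\Psi(\eta_\varepsilon)-h\sqrt{\varepsilon}$, contradicting the Ekeland inequality.

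Your final step is exactly Proposition~\ref{res1} applied with $\xi_n\to 0$ in norm.
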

 
More on nonsmooth analysis can be found in Grossinho-Tersian \cite{grossinho1}.
Next, we present an important result which relates the convergence of a sequence in the function space $X$ to the convergence of its corresponding subgradients in the dual space $X^*$.

\begin{proposition}(Santos-Tavares \cite[Proposition $1$]{santos1})\label{res1}
Let $(u_n)\subset X$ and $(\eta_n)\subset X^*$ with $\eta_n\in\partial J(u_n)$. If $u_n\to u$ in $X$ and $\eta_n\overset{\ast}{\rightharpoonup}\eta$ in $X^*$ (w.r.t. weak*-convergence), then $\eta\in\partial J(u)$.
\end{proposition}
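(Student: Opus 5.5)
The plan is to prove Proposition \ref{res1} directly from the definition of the Clarke subdifferential, using the upper semicontinuity of the generalized directional derivative. Recall that for a locally Lipschitz $J:X\to\mathbb{R}$ the Clarke directional derivative is
$$J^0(u;h)=\limsup_{w\to u,\ t\downarrow 0}\frac{J(w+th)-J(w)}{t},$$
and that $\eta\in\partial J(u)$ if and only if $\langle \eta,h\rangle\le J^0(u;h)$ for every $h\in X$. So it suffices to fix an arbitrary $h\in X$ and show $\langle\eta,h\rangle\le J^0(u;h)$.

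First, since $\eta_n\in\partial J(u_n)$, we have $\langle\eta_n,h\rangle\le J^0(u_n;h)$ for every $n$. By weak$^*$ convergence, $\langle\eta_n,h\rangle\to\langle\eta,h\rangle$. Passing to the $\limsup$ in $n$ then gives
$$\langle\eta,h\rangle\le\limsup_{n\to\infty}J^0(u_n;h).$$

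Second, it remains to prove the upper semicontinuity estimate $\limsup_n J^0(u_n;h)\le J^0(u;h)$ whenever $u_n\to u$ strongly. This is the main step, and I would argue it by a diagonal choice. Local Lipschitz continuity near $u$ guarantees that all quantities are finite: $|J^0(w;h)|\le L\|h\|$ for $w$ near $u$, where $L$ is the local Lipschitz constant. For each $n$, the definition of $\limsup$ provides $w_n\in X$ and $t_n\in(0,1/n)$ with $\|w_n-u_n\|<1/n$ and
$$\frac{J(w_n+t_nh)-J(w_n)}{t_n}\ge J^0(u_n;h)-\frac1n.$$
Then $w_n\to u$ and $t_n\downarrow0$. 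Taking $\limsup$ in $n$, the left-hand side is bounded by $J^0(u;h)$, which yields the claim.

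Combining the two steps gives $\langle\eta,h\rangle\le J^0(u;h)$ for all $h$, hence $\eta\in\partial J(u)$. Equivalently, one can phrase the conclusion via the fact that $\partial J(u)$ is the weak$^*$-closed convex set $\{\xi:\langle\xi,\cdot\rangle\le J^0(u;\cdot)\}$.

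The only delicate point is the diagonal argument, together with making sure that strong convergence of $u_n$ is used there; weak convergence of $u_n$ would not suffice. Boundedness of $(\eta_n)$ is automatic, since $\|\eta_n\|\le L$ for large $n$.
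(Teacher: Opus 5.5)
Your proof is correct. Fixing $h$, you pass to the limit in $\langle\eta_n,h\rangle\le J^0(u_n;h)$ using weak$^*$ convergence, then use the upper semicontinuity of $J^0(\cdot\,;h)$ along the strongly convergent sequence $(u_n)$, established by your diagonal choice of $w_n,t_n$. This is the standard argument for the strong$\times$weak$^*$ closedness of the Clarke subdifferential (Clarke's monograph, \S 2.1).

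The paper gives no proof of its own here and simply cites Santos--Tavares for this standard fact, so there is no competing argument to compare against.
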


To investigate the solutions of the Neumann boundary value problem \eqref{main}, we define an energy functional that helps us find the critical points related to these solutions.

\begin{definition}\label{def11}
A function $u \in W^{1,p}(\Omega)$ is said to be a solution of the Neumann problem \eqref{main} if it is a critical point of the energy functional $I: W^{1,p}(\Omega) \to \mathbb{R}$ defined by
\begin{align}\label{eq1}
I(u):=\frac{1}{p}\int_{\Omega}|\nabla u|^p\,dx-\int_{\Omega}G(u)\,dx+\int_{\partial\Omega}f(Tr(u))\,dS, 
\end{align}
where $\mathrm{Tr}$ denotes the trace operator, $dS$ being the surface element.
Here, $G:\mathbb{R}\to\mathbb{R}$ is the (even) potential associated with the discontinuous nonlinearity, defined by
\[
G(t):=\frac{1}{q}H(|t|-b)\big(|t|^q-b^q\big).
\]
It is locally Lipschitz on $\mathbb{R}$, $G\in C^1(\mathbb{R}\setminus\{\pm b\})$, and
$$
\hbox{ for every } 
|t|\neq b,
\hbox{ we have }
G'(t)=H(|t|-b)|t|^{q-2}t.$$

Since $q<p^*$, the Sobolev embedding $W^{1,p}(\Omega)\hookrightarrow L^q(\Omega)$ guarantees that the bulk term $\int_\Omega G(u)\,dx$ is finite for every $u\in W^{1,p}(\Omega).$
In particular, $I$ is well defined and locally Lipschitz on $W^{1,p}(\Omega)$.
\end{definition}
We also observe that \begin{align}\label{obs1}\underset{u\in W^{1,p}(\Omega)}\inf I(u)\leq 0=I(0).\end{align}

We now state some important properties of the functional 
$$ Z:L^q(\Omega)\to\mathbb{R}, \ \ Z(u):=\int_{\Omega}G(u)\,dx.$$
Set $$h(t):=H(|t|-b)|t|^{q-2}t
\hbox{ for }
t\in\mathbb{R}.$$
 The following lemma describes the Clarke subdifferential of $Z$ (cf. Santos--Figueiredo \cite[Lemma 3.1]{santos2}).

\begin{lemma}\label{subderivative}
The functional $Z: L^q(\Omega) \to \mathbb{R}$, defined by $$Z(u):=\int_{\Omega}G(u)\,dx,$$
 is locally Lipschitz. Moreover, for every $u\in L^q(\Omega)$ and every $\eta\in\partial Z(u),$ we have $\eta\in L^{\frac{q}{q-1}}(\Omega)$ and
\[
\eta(x)\in [\underline{h}(u(x)),\overline{h}(u(x))],\quad \text{for a.e. }x\in\Omega,
\]
where
\[
[\underline{h}(t),\overline{h}(t)]= \begin{cases}
\{0\}, & |t|<b,\\
\big[\min\{0,|t|^{q-2}t\},\max\{0,|t|^{q-2}t\}\big], & |t|=b,\\
\{|t|^{q-2}t\}, & |t|>b.
\end{cases}
\]
\end{lemma}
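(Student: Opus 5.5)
The plan is to obtain the lemma from the Chang--Clarke description of the generalized gradient of an integral functional with a locally Lipschitz integrand, specialised to $G$ and $h$.

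\textbf{Step 1: local Lipschitz continuity.} For every $t,s\in\mathbb{R}$ we have $|G'(\tau)|\le|\tau|^{q-1}$ away from $\pm b$, and $G$ is continuous and piecewise $C^1$. Hence
\[
|G(t)-G(s)|\le (|t|+|s|)^{q-1}|t-s|.
\]
By H\"older's inequality with exponents $\frac{q}{q-1}$ and $q$, this gives
\[
|Z(u)-Z(v)|\le \big(\|u\|_q+\|v\|_q\big)^{q-1}\|u-v\|_q .
\]
So $Z$ is Lipschitz on bounded subsets of $L^q(\Omega)$, and in particular locally Lipschitz. Since $(L^q)^*=L^{q'}$ with $q'=\frac{q}{q-1}$, every $\eta\in\partial Z(u)$ is automatically an element of $L^{q'}(\Omega)$.

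\textbf{Step 2: pointwise estimate of the generalized directional derivative.} Fix $u,v\in L^q(\Omega)$. Choose sequences $w_k\to u$ in $L^q$ and $\lambda_k\downarrow0$ realising
\[
Z^0(u;v)=\limsup_{w\to u,\ \lambda\downarrow0}\frac{Z(w+\lambda v)-Z(w)}{\lambda}.
\]
Pass to a subsequence with $w_k\to u$ a.e. and $|w_k|\le g\in L^q$. The difference quotients are dominated by $(2g+|v|)^{q-1}|v|\in L^1$, so Fatou's lemma in its reverse form gives
\[
Z^0(u;v)\le\int_\Omega G^0(u(x);v(x))\,dx .
\]
For the one-dimensional function $G$, Clarke's formula $\partial G(t)=\mathrm{co}\{\lim G'(t_j):t_j\to t,\ t_j\ne\pm b\}$ yields $\partial G(t)=[\underline h(t),\overline h(t)]$. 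Indeed, at $|t|\neq b$ the function $G$ is $C^1$ with $G'(t)=h(t)$. At $t=\pm b$ the one-sided limits of $G'$ are $0$ and $|b|^{q-2}t$, so the convex hull is exactly the stated interval. Consequently
\[
G^0(t;s)=\max\{\underline h(t)s,\ \overline h(t)s\}.
\]

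\textbf{Step 3: from the inequality to the pointwise inclusion.} This is the step where most care is needed. Take $\eta\in\partial Z(u)$, so that $\int_\Omega\eta v\le Z^0(u;v)$ for all $v\in L^q$. Choose $v=\chi_E$ for an arbitrary measurable $E\subset\Omega$. Step 2 then gives
\[
\int_E\eta\le\int_E\overline h(u).
\]
Choosing $v=-\chi_E$ instead gives
\[
\int_E\eta\ge\int_E\underline h(u).
\]
Both $\overline h(u)$ and $\underline h(u)$ lie in $L^{q'}$, because $|h(t)|\le|t|^{q-1}$, so these choices of $v$ are admissible. Since $E$ is arbitrary, the Lebesgue differentiation argument (or simply taking $E=\{\eta>\overline h(u)\}$, respectively $E=\{\eta<\underline h(u)\}$) gives
\[
\underline h(u(x))\le\eta(x)\le\overline h(u(x))\quad\text{for a.e. } x\in\Omega.
\]
This is the claim.

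The main technical point is the measurability and domination in the reverse Fatou step, which is handled by the subsequence extraction above.
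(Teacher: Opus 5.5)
Your proposal is correct and follows essentially the same route as the paper: the same local Lipschitz estimate via H\"older, the same computation of $\partial G(t)=[\underline{h}(t),\overline{h}(t)]$, and then the integral-functional result. The only difference is that the paper simply cites Clarke's theorem (Theorem 2.7.5) for the pointwise inclusion, whereas you reprove that step in this special case via the reverse Fatou bound $Z^0(u;v)\le\int_\Omega G^0(u;v)\,dx$ and by testing with $\pm\chi_E$.
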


\begin{proof}
Fix $R>0$ and let $u,v\in L^q(\Omega)$ satisfy $\|u\|_{L^q},\|v\|_{L^q}\le R$. Since
\[
G(t)=\frac{1}{q}\big(|t|^q-b^q\big)_+\quad\text{for every }t\in\mathbb{R},
\]
where $s\mapsto (s)_+$ is $1$-Lipschitz, we have 
\[
|G(s)-G(t)|\le \frac{1}{q}\,\big||s|^q-|t|^q\big|\le \big(|s|^{q-1}+|t|^{q-1}\big)|s-t|,
\hbox{ for every }
s,t\in\mathbb{R}.
\]
Therefore,
\[
|Z(u)-Z(v)|\le \int_{\Omega}\big(|u|^{q-1}+|v|^{q-1}\big)|u-v|\,dx
\le \big(\|u\|_{L^q}^{q-1}+\|v\|_{L^q}^{q-1}\big)\,\|u-v\|_{L^q},
\]
which shows that $Z$ is locally Lipschitz on $L^q(\Omega)$.

Next, $G$ is locally Lipschitz on $\mathbb{R}$ and $\partial G(t)=[\underline{h}(t),\overline{h}(t)]$ (in the sense of Clarke) with $[\underline{h}(t),\overline{h}(t)]$ as stated in the lemma. By Clarke's theorem on generalized gradients of integral functionals (see, e.g., Clarke \cite[Theorem 2.7.5]{clark1}), for every $u\in L^q(\Omega)$ and every $\eta\in\partial Z(u),$ we have $\eta\in L^{\frac{q}{q-1}}(\Omega)$ and
\[
\eta(x)\in \partial G\big(u(x)\big)=[\underline{h}(u(x)),\overline{h}(u(x))],\quad\text{for a.e. }x\in\Omega.
\]
This proves the claim.
\end{proof}

In this paper, we assume that the function $f: \mathbb{R} \to \mathbb{R}$ fulfills the following conditions
\begin{enumerate}[label=(\subscript{H}{{\arabic*}})]
    \item \label{H1} $f \in L^1(\mathbb{R})$;
    \item \label{H2} for every $L > 0$, there exists $P_L \in L_+^{p'}(\partial\Omega)$ such that for each $w(u) \in \partial f(u(x))$, it follows that $|w(u(x))| \leq P_L(x)$ almost everywhere on $\partial\Omega$ for $|u(x)| < L$.
\end{enumerate}

Additionally, according to Lebourg's theorem (see Clark \cite[Theorem 2.3.7, p. 41]{clark1}), and since the locally Lipschitz condition in $\mathbb{R}$ is equivalent to the Lipschitz condition on bounded sets, there exists $\gamma \in L^{p'}_+(\partial\Omega)$ such that 
\begin{align}\label{H3}
    |f(t)| \leq \gamma(x) |t|, \quad \text{a.e. on } \partial\Omega \text{ for every } t \in \mathbb{R}. 
\end{align}

	We observe that  conditions \ref{H1} and \ref{H2} are also sufficient to tackle problems driven by a discontinuous nonlinearity in addition to an irregular Neumann data.

\begin{remark}\label{rem3}
A direct consequence of Lemma \ref{subderivative} is that every
 element $\eta$ of the subdifferential $\partial Z(u)$ satisfies $\eta \in L^{\frac{q}{q-1}}(\Omega)$ and, for almost every $x \in \Omega$, the inclusion $\eta(x) \in [\underline{h}(u(x)), \overline{h}(u(x))]$ holds.
\end{remark} 
 
\section{Auxiliary  Results}\label{aux_res}

In  the next section, we shall establish our main results through nonsmooth theory. Accordingly, we shall outline and prove some auxiliary results in this section.

\begin{proposition}\label{PS_Cond}
Under hypothesis \ref{H2}, the energy functional $I: W^{1,p}(\Omega) \to \mathbb{R}$ defined in \eqref{eq1} satisfies the Palais–Smale condition.
\end{proposition}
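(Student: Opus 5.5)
Let $(u_n)\subset W^{1,p}(\Omega)$ satisfy $I(u_n)\to c$ and $D_I(u_n)\to 0$. Choose $\xi_n\in\partial I(u_n)$ with $\|\xi_n\|_{(W^{1,p})^*}=D_I(u_n)\to0$. By the sum rule for Clarke subdifferentials and Lemma \ref{subderivative}, together with the same integral-functional theorem of Clarke applied to the boundary term, $\xi_n$ has the representation
\[
\langle \xi_n,v\rangle=\int_\Omega|\nabla u_n|^{p-2}\nabla u_n\cdot\nabla v\,dx-\int_\Omega\eta_n v\,dx+\int_{\partial\Omega}w_n\,Tr(v)\,dS,
\]
where $\eta_n(x)\in[\underline h(u_n(x)),\overline h(u_n(x))]$ a.e. in $\Omega$ and $w_n(x)\in\partial f(Tr\,u_n(x))$ a.e. on $\partial\Omega$.

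\textbf{Step 1 (boundedness).} Form the combination $I(u_n)-\frac1q\langle\xi_n,u_n\rangle$. Since $\eta_n u_n=|u_n|^q$ where $|u_n|>b$ and $\eta_n u_n=0$ where $|u_n|<b$, we get $\eta_n u_n\ge qG(u_n)$, so the bulk terms combine to $\int_\Omega(\frac1q\eta_nu_n-G(u_n))\,dx\ge0$. Hence
\[
\Big(\frac1p-\frac1q\Big)\|\nabla u_n\|_p^p\le c+1+o(1)\|u_n\|+\int_{\partial\Omega}\Big(|f(Tr\,u_n)|+\tfrac1q|w_n||Tr\,u_n|\Big)dS .
\]
The boundary terms are handled with \eqref{H3} and \ref{H2}; via the trace inequality they grow at most linearly in $\|u_n\|_{W^{1,p}}$. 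Controlling only the gradient is not enough, so the $L^p$ part must be controlled too. For this, argue by contradiction: if $\|u_n\|\to\infty$, normalize $v_n=u_n/\|u_n\|$. The displayed estimate gives $\nabla v_n\to0$ in $L^p$, so $v_n\to$ a nonzero constant $k$. Then $I(u_n)\to c$ combined with $\int_\Omega G(u_n)\sim \frac{|k|^q}{q}\|u_n\|^q|\Omega|\to\infty$, against boundary terms that grow only linearly, forces $\frac1p\|\nabla u_n\|^p_p\gtrsim\|u_n\|^q$. This contradicts the bound $\|\nabla u_n\|_p^p=O(\|u_n\|)$. This step is the main obstacle: the growth hypotheses on $f$ are weak, and the absence of a zero-order term in the $W^{1,p}$ norm makes the coercivity delicate. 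I expect to rely on \eqref{H3} (linear growth of $f$) to make the comparison work.

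\textbf{Step 2 (strong convergence).} By reflexivity, pass to a subsequence with $u_n\rightharpoonup u$ in $W^{1,p}(\Omega)$. Since $q<p^*$, compactness gives $u_n\to u$ in $L^q(\Omega)$, and compactness of the trace gives $Tr\,u_n\to Tr\,u$ in $L^p(\partial\Omega)$. Moreover $(\eta_n)$ is bounded in $L^{q'}$, and $(w_n)$ is bounded in $L^{p'}(\partial\Omega)$, using \ref{H2} after truncating large values and the linear growth. Test $\xi_n$ with $u_n-u$:
\[
\int_\Omega|\nabla u_n|^{p-2}\nabla u_n\cdot\nabla(u_n-u)\,dx=\langle\xi_n,u_n-u\rangle+\int_\Omega\eta_n(u_n-u)\,dx-\int_{\partial\Omega}w_n(Tr\,u_n-Tr\,u)\,dS\to0 .
\]
By the $(S_+)$ property of $-\Delta_p$ (for $p\ge2$, using the monotonicity inequality $\langle|a|^{p-2}a-|b|^{p-2}b,a-b\rangle\ge c|a-b|^p$ together with the weak convergence), this yields $\nabla u_n\to\nabla u$ in $L^p$. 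Combined with $u_n\to u$ in $L^p$, we obtain $u_n\to u$ in $W^{1,p}(\Omega)$. This proves Proposition \ref{PS_Cond}, and by Proposition \ref{res1} the limit satisfies $0\in\partial I(u)$ whenever $c$ is a critical level.
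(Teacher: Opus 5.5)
Your route differs from the paper's at the decisive step. The paper gets boundedness of $(u_n)$ by asserting that $I$ is coercive, using only $I(u_n)\le\beta$. It then obtains strong convergence by showing $\|u_n\|\to\|u\|$ from weak limits of the subgradient representatives, a Radon--Riesz argument. You instead use the Ambrosetti--Rabinowitz combination $I(u_n)-\frac1q\langle\xi_n,u_n\rangle$ with the pointwise bound $\eta_nu_n\ge qG(u_n)$, exclude blow-up along constants by normalization, and conclude via $(S_+)$. Yours is the argument that can actually work, because $I$ is not coercive. For constants $|k|>b$ one has $I(k)=-\frac{|\Omega|}{q}(|k|^q-b^q)+\int_{\partial\Omega}f(k)\,dS\to-\infty$, since $q>1$ and \eqref{H3} makes the boundary term $O(|k|)$. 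The paper's estimate \eqref{bddness2} closes only because its $A$ equals $-\int_\Omega G(u)\,dx$ rather than $+\int_\Omega G(u)\,dx$, and because $\|\nabla u_n\|_p$ is treated as the full $W^{1,p}$ norm. Your Step 2 is a cleaner form of the paper's norm-convergence step.

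The gap is the control of the boundary subgradients $w_n$. Step 1 needs $\int_{\partial\Omega}|w_n||\mathrm{Tr}\,u_n|\,dS=O(\|u_n\|)$, or at least $o(\|u_n\|^p)$. Step 2 needs $(w_n)$ bounded in $L^{p'}(\partial\Omega)$. Both require a \emph{global} growth bound on $\partial f$, such as $|w|\le\gamma_1(x)$ for all $w\in\partial f(t)$ and all $t\in\mathbb{R}$, or $|w|\le a(x)+c|t|^{r-1}$ with $r<p$. Neither hypothesis you invoke gives this:
\begin{itemize}
\item \ref{H2} bounds $\partial f(t)$ only for $|t|<L$, with $P_L$ depending on $L$. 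Since $(\mathrm{Tr}\,u_n)$ is not bounded in $L^\infty(\partial\Omega)$, it says nothing on $\{|\mathrm{Tr}\,u_n|\ge L\}$, and ``truncating large values'' does not change that.
\item \eqref{H3} bounds $f$, not $\partial f$. For instance, $f(t)=\sin(t^m)/(1+t^2)$ satisfies \ref{H1} and \eqref{H3} with $\gamma\equiv1$, yet $f'$ is of order $|t|^{m-3}$ along a sequence $|t|\to\infty$. So for large $m$ the hypotheses do not stop the boundary term in your displayed inequality from dominating $\|u_n\|^p$, and $\nabla v_n\to0$ no longer follows.
\end{itemize}
You must add such a growth condition on $\partial f$ explicitly. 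The paper tacitly needs the same thing: to make $T$ locally Lipschitz on $L^p(\partial\Omega)$, to extract $\theta_n\rightharpoonup\theta$ in $L^{p'}(\partial\Omega)$, and to derive \eqref{H3} from Lebourg's theorem. Once it is assumed, e.g.\ $|w|\le\gamma_1(x)$ globally, your Steps 1 and 2 go through as written.
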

\begin{proof}
Assume that $(u_n) \subset W^{1,p}(\Omega)$ is a sequence such that 
$$I(u_n) \to c \quad \text{and} \quad \partial I(u_n) \to 0 \quad \text{in } W^{-1,p'}(\Omega).$$ 
We shall first show that $I$ is coercive. 

To this end, define $$J: W^{1,p}(\Omega) \to \mathbb{R}$$ by 
$$J(u) := \int_{\partial\Omega} f(Tr(v)) dS,$$ 
where 
$$Tr: W^{1,p}(\Omega) \to W^{1-\frac{1}{p},p}(\partial\Omega)$$ 
is the trace operator. 

Let $$K(u) := p^{-1} \int |\nabla u|^p dx$$ and define $T: L^p(\partial\Omega) \to \mathbb{R}$ by 
$$T(v) := \int_{\partial\Omega} f(v) dS, \quad \text{for every } v \in L^p(\partial\Omega).$$ 

Notably, since $T$ is locally Lipschitz, we can apply Clarke's theorem \cite[Theorem 2.7.5]{clark1} to obtain 
$$\partial T(u) = \{v \in L^{p'}(\partial\Omega): v \in \partial f(u(x)) \text{ a.e. on } \partial\Omega\}.$$ 
Since
 $J = T \circ Tr$, the Chain Rule leads to 
$$\partial J(v) = \partial T(Tr(v)) DTr(v).$$
Now, let 
$$Tr^*: \left(W^{1-\frac{1}{p},p}(\partial\Omega)\right)^* \to \left(W^{1,p}(\Omega)\right)^*$$ 
be the adjoint operator corresponding to $Tr$. According to Clarke \cite[Remark 2.3.11]{clark1}, we have 
$$\partial J(v) \subset Tr^*(\partial T(Tr(v))), \quad \text{for every } v \in W^{1,p}(\Omega).$$

We shall use 
$$\|u\| := \left[\left(\int_{\Omega} |\nabla u|^p dx\right)^p + \left(\int_{\Omega} |u|^p dx\right)^p\right]^{\frac{1}{p}}$$ 
as the norm on elements of $W^{1,p}(\Omega)$.

We are now ready to define the weak formulation of problem \eqref{main} as follows:
$$I(u) := K(u) - A(u) + F(u),$$
where 
$$A(u) := \frac{1}{q} \int_{\Omega} H(|u| - b)(b^q - |u|^q) dx, \quad F(u) = \int_{\partial\Omega} f(Tr(u)) dS.$$
Thus, we have 
$$\partial I(u) \subset \{K'(u)\} - \partial A(u) + \partial F(u), \quad \text{for every } u \in W^{1,p}(\Omega).$$

Suppose that the sequence $(I(u_n))$ is bounded by $\beta > 0$. Then we have
\begin{align}\label{bddness1}
\begin{split}
I(u_n) = & K(u_n) - A(u_n) + F(u_n) \leq \beta.
\end{split}
\end{align}
Consequently, it follows that
\begin{align}\label{bddness2}
\begin{split}
\frac{1}{p} \|u_n\|^p \leq & \beta + \frac{1}{q} \int_{\Omega \cap \{|u_n| > \alpha\}} (b^q - |u_n|^q) dx + \int_{\partial\Omega} f(Tr(u(x))) dS.
\end{split}
\end{align}
Therefore, we obtain
\begin{align}\label{bddness3}
\begin{split}
\frac{1}{p} \leq & \frac{\beta}{\|u_n\|^p} + \frac{1}{q\|u_n\|^p} \int_{\Omega \cap \{|u_n| > \alpha\}} (b^q - |u_n|^q) dx + \frac{1}{\|u_n\|^p} \int_{\partial\Omega} f(Tr(u(x))) dS\\
\leq & \left(\frac{\beta}{\|u_n\|^p} + \frac{1}{\|u_n\|^p} \int_{\partial\Omega} f(Tr(u(x))) dS\right) \to 0, \quad \text{as } n \to \infty,
\end{split}
\end{align}
since by \eqref{H3} there exists $\gamma \in L^{p'}(\partial\Omega)$ such that 
$$|f(t)| \leq \gamma(x) |t| \quad \text{a.e. on } \partial\Omega \text{ and for every } t \in \mathbb{R}.$$
Hence, we have
\begin{align}\label{bddness4}
\begin{split}
\frac{1}{\|u_n\|^p}\left|\int_{\partial\Omega} f(Tr(u(x))) dS\right| \leq & \frac{1}{\|u_n\|^p} \int_{\partial\Omega} \gamma |Tr(u)| dS \leq \frac{C \|\gamma\|_{p'}}{\|u_n\|^{p-1}} \to 0, \quad \text{as } n \to \infty.
\end{split}
\end{align}
This leads to a contradiction and hence $I$ is indeed coercive.

 The coercivity of $I$ ensures the boundedness of the sequence $(u_n)$. This allows us to extract a subsequence, still denoted as $(u_n)$, which converges weakly to some $u \in W^{1,p}(\Omega)$.

We first observe that 
$$\|u\|^p + o(1) = D_{I(u_n)}(u),$$
and thus there exist sequences $(a_n)$ and $(\theta_n)$ such that 
$$a_n \rightharpoonup a \quad \text{in } L^{q'}(\Omega), \quad \text{and} \quad \theta_n \rightharpoonup \theta \quad \text{in } L^{p'}(\partial\Omega).$$
Consequently, we have 
$$\|u\|^p = \int_{\Omega} a u \, dx + \int_{\partial\Omega} Tr^*(\theta(u)) \, dx.$$

Note that   
\begin{equation}\label{convergence1}
\|u\|^p = \int_{\Omega} a u \, dx + \int_{\partial\Omega} Tr^*(\theta(u)) \, dx + o(1) = D_{I(u_n)}(u_n) = \underset{n\to\infty}{\lim} \|u_n\|^p,
\end{equation}  
which implies 
$$u_n \to u, \text{ as } n \to \infty  \text{ in } W^{1,p}(\Omega).$$ 
Therefore, the (PS) condition indeed holds.
 \end{proof}
 
 \begin{proposition}\label{critical_points}
 If  conditions \ref{H1} and \ref{H2} hold, then the functional $I,$ defined in \eqref{eq1}, has a nontrivial critical point in $X$.
 \end{proposition}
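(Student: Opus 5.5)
The plan is to apply the nonsmooth Mountain Pass Theorem \ref{MP_them} to $I$ on $X=W^{1,p}(\Omega)$. Proposition \ref{PS_Cond} gives the Palais–Smale condition at every level. The resulting critical point $u$ has $I(u)=c\ge\rho_2>0=I(0)$, so $u\neq 0$. This leaves three things to do: check that $I(0)=0$, build the mountain geometry near the origin, and find a point $\sigma$ far away with $I(\sigma)<0$.

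\emph{Step 1: normalisation and the small sphere.} First I check that $I(0)=0$. We have $G(0)=0$, and \eqref{H3} gives $f(0)=0$, so $I(0)=0$. For the geometry near the origin, the key observation is that $G(t)=0$ whenever $|t|\le b$, and $0\le G(t)\le \frac1q|t|^q$ for all $t$. Hence
\[
\int_\Omega G(u)\,dx\le \tfrac1q\|u\|_{L^q}^q\le C\|u\|^q ,
\]
and since $q>p$ this is of higher order than $\frac1p\|\nabla u\|_p^p$ on small spheres.

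The weak point is the boundary term. By \eqref{H3} and the trace inequality,
\[
\Big|\int_{\partial\Omega}f(Tr\,u)\,dS\Big|\le C\|\gamma\|_{p'}\|u\|,
\]
which is only linear, so it cannot be absorbed directly. Two further issues arise. The gradient term does not control the $L^p$ part of the norm, since constants have zero gradient. Moreover, near the origin the boundary term could be negative and linear.

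My first attempt is to assume, as the intended structure (conditions \ref{C1}–\ref{C3} presumably encode this), that $f\ge 0$ near $0$, or that $f(t)\ge -\epsilon|t|^p$ for small $|t|$. For constants of small size $|u|\le b$ one has $I(u)=|\partial\Omega| f(u)$, so some positivity of $f$ near $0$ is genuinely needed. I would also split $u=\bar u+\tilde u$ into its mean and a mean-zero part, and use Poincaré–Wirtinger to control $\|\tilde u\|_p$ by $\|\nabla u\|_p$. On the sphere $\|u\|=\rho_1$ small, this should give $I(u)\ge \rho_2>0$. I expect this step to be the main obstacle, because hypotheses \ref{H1}–\ref{H2} alone do not obviously give positivity of $f$ near $0$.

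\emph{Step 2: a far point.} I take $\sigma=t\varphi$ with $\varphi\equiv 1$, or any fixed $\varphi\ge 0$ with $\varphi>b$ on a set of positive measure. Then
\[
I(t\varphi)\le \frac{t^p}{p}\|\nabla\varphi\|_p^p-\frac{1}{q}\int_{\{t\varphi>b\}}(t^q\varphi^q-b^q)\,dx+t\,C\|\gamma\|_{p'}\|\varphi\| .
\]
Since $q>p>1$, the $-t^q$ term dominates as $t\to\infty$, so $I(t\varphi)<0$ and $\|t\varphi\|>\rho_1$ for $t$ large. For $\varphi\equiv1$ the gradient term even vanishes.

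\emph{Step 3: conclusion.} Theorem \ref{MP_them} yields a sequence with $I(u_n)\to c\ge\rho_2$ and $D_I(u_n)\to0$. Proposition \ref{PS_Cond} gives a strongly convergent subsequence $u_n\to u$. Choosing $\eta_n\in\partial I(u_n)$ with $\|\eta_n\|=D_I(u_n)\to 0$, Proposition \ref{res1} gives $0\in\partial I(u)$ and $I(u)=c>0$. Hence $u$ is a nontrivial critical point of $I$.
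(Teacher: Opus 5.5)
\textbf{There is a genuine gap in Step 1, exactly where you suspected, and it cannot be closed from the stated hypotheses.}

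For a constant $c$ with $|c|\le b$ you have $\nabla c=0$, $G(c)=0$ and $Tr(c)=c$, so $I(c)=|\partial\Omega|\,f(c)$. Every sufficiently small sphere $\|u\|=\rho_1$ contains such constants. Nothing in \ref{H1}, \ref{H2} or \eqref{H3} controls the sign of $f$ near $0$. For instance, $f(t)=-t^2e^{-t^2}$ satisfies all three. Then $I(c)<0$ for all small $c\neq 0$, so condition (1) of Theorem \ref{MP_them} fails on every small sphere. For $f\equiv 0$, $I$ vanishes on the whole segment of constants $|c|\le b$, so again no sphere carries $I\ge\rho_2>0$. (In that case the proposition happens to hold, since constants $0<|c|<b$ are critical points, but not for mountain-pass reasons.)

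Your suggested repairs do not help either:
\begin{itemize}
\item A bound $f(t)\ge-\epsilon|t|^p$ cannot be absorbed on constants. There the gradient term is zero, and the Poincar\'e--Wirtinger splitting controls only the mean-zero part.
\item Assuming $f\ge 0$ near $0$ gives only $I(c)\ge 0$, not $I\ge\rho_2>0$.
\end{itemize}
The mountain-pass route needs a quantitative positivity hypothesis on $f$ near $0$ that is not part of the proposition. As it stands, your argument proves a different statement.

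\textbf{Comparison with the paper.} The paper does not use the mountain pass at all. It invokes Chang's theorem: a locally Lipschitz functional that is bounded below on a reflexive space attains its infimum at a critical point. For boundedness below it relies on the coercivity asserted in the proof of Proposition \ref{PS_Cond}. Your Step 2 shows that this route is unavailable. Indeed, $I(t)\le -\frac{|\Omega|}{q}(t^q-b^q)+Ct\to-\infty$ along constants, so $I$ is neither coercive nor bounded below. Moreover, a minimizer at level $\inf I\le 0=I(0)$ would not automatically be nontrivial.

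\textbf{This also affects your Step 3.} The proof of Proposition \ref{PS_Cond} rests on that same coercivity, so you cannot simply quote it. You would have to prove $(\mathrm{PS})_c$ directly, for example from the inequality $a\,t-\theta G(t)\ge(1-\theta/q)(|t|^q-b^q)$ for $a\in\partial G(t)$ and $\theta\in(p,q)$. In that argument, controlling the boundary term $\int_{\partial\Omega}\eta_n\,Tr(u_n)\,dS$ needs a global growth bound on $\partial f$. Hypothesis \ref{H2} gives only the local bounds $P_L$, so it does not supply this.
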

 
 \begin{proof}
 We invoke a fundamental result from nonsmooth critical point theory due to Chang \cite[Theorem 3.5]{chang1}, which states that every locally Lipschitz functional on a reflexive Banach space that is bounded below attains its infimum as a critical value. The functional $I$ is locally Lipschitz and, by its coercivity, is bounded from below. Since $W^{1,p}(\Omega)$ is a reflexive Banach space, Chang's theorem guarantees that $I$ possesses a critical point at the level $\inf_{X} I$. This concludes the proof of Proposition \ref{critical_points}.
 \end{proof} 

Next, we state the following result.
\begin{theorem}\cite[Theorem $3.1$]{dai1}\label{fab1}
 Let $ X $ be a reflexive and separable Banach space. Then there exists a biorthogonal system $\{(e_n, e_n^*)\}_{n=1}^{\infty}$ such that the sequence $\{e_n\}$ is complete in $X$, the sequence $\{e_n^*\}$ is complete in the dual space $X^*$, and they satisfy the duality relation $\langle e_n^*, e_m \rangle = \begin{cases} 
 	0, & \text{if}~ n\neq m \\
 	1, & \text{if}~ n=m,
 \end{cases}$ 
 for all $n, m \in \mathbb{N}.$
 \end{theorem}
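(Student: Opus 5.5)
The statement is the Fabian-type (Markushevich) biorthogonal system result for separable reflexive Banach spaces, cited from \cite{dai1}. I would prove it by Gram--Schmidt-type biorthogonalization carried out simultaneously in $X$ and $X^*$. Since $X$ is reflexive and separable, $X^*$ is separable as well. Its dual is $X^{**}=X$, so the roles of $X$ and $X^*$ are symmetric. Fix a dense sequence $(x_k)$ in $X$ and a dense sequence $(f_k)$ in $X^*$. The aim is to build $(e_n,e_n^*)$ inductively so that
\[
\mathrm{span}\{e_1,\dots,e_n\}\supseteq\{x_1,\dots,x_m\}\quad\text{and}\quad \mathrm{span}\{e_1^*,\dots,e_n^*\}\supseteq\{f_1,\dots,f_m\}
\]
for $m$ growing with $n$. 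Completeness of both systems then follows from density of $(x_k)$ and $(f_k)$.

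The inductive step alternates between the two sides. Suppose a finite biorthogonal family $(e_i,e_i^*)_{i\le n}$ is given, and let $P_n x=\sum_{i\le n}\langle e_i^*,x\rangle e_i$ be the associated projection. On an odd step, take the first $x_k$ not in $E_n:=\mathrm{span}\{e_i\}$ and set $y=x_k-P_nx_k\neq 0$. Note that $y$ is annihilated by $e_1^*,\dots,e_n^*$. By Hahn--Banach there is a functional vanishing on $E_n$ and nonzero at $y$, but it must also be adjusted to lie in the span we want to exhaust. So I would instead take $g=f_j-P_n^*f_j$ for the first $f_j$ with $\langle f_j - P_n^* f_j, y\rangle\neq0$. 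If no such $f_j$ exists, density of $(f_k)$ together with the fact that $\{f-P_n^*f\}$ is dense in the annihilator of $E_n$ would force $y=0$, a contradiction. Then set $e_{n+1}=y$ and $e_{n+1}^*=g/\langle g,y\rangle$. On an even step, do the symmetric construction starting from the first $f_j$ outside $\mathrm{span}\{e_i^*\}$, using density of $(x_k)$ and reflexivity to get the nondegenerate pairing. Biorthogonality is preserved at each step, since the new vectors are orthogonal to the old ones by construction.

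I expect the main obstacle to be the bookkeeping: each $x_k$ and each $f_j$ must eventually enter the spans, even though the auxiliary $f_j$ used on odd steps is only partially absorbed. The fix is that $f_j=P_{n+1}^*f_j+(f_j-P_{n+1}^*f_j)$, and the even steps later capture the residuals. Skipping elements already in the span keeps the process well defined. If either space is finite-dimensional the process terminates and the statement is read with finite index sets. Otherwise the construction yields the infinite biorthogonal system with both sequences complete, which is exactly Theorem \ref{fab1}.
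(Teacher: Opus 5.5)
Your argument is correct, and it is a different route only in the sense that the paper gives no proof: it imports the statement from \cite{dai1}. There it is the classical existence of a Markushevich basis, commonly stated as $X=\overline{\mathrm{span}}\{e_n\}$, $X^*=\overline{\mathrm{span}}^{w^*}\{e_n^*\}$. In a reflexive space the weak$^*$ and weak topologies on $X^*$ coincide, so by Mazur's theorem this is the same as norm-completeness of $\{e_n^*\}$.

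Your alternating biorthogonalization is the standard proof of that classical theorem, but run with a norm-dense sequence in $X^*$ rather than a merely separating one. This gives norm-completeness of $\{e_n^*\}$ directly, without Mazur. It also shows that reflexivity is used only to make $X^*$ separable, so the same argument yields a shrinking Markushevich basis for any Banach space with separable dual.

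Several steps are simpler than you present them:
\begin{enumerate}
\item On an odd step, $P_ny=0$ gives $\langle f_j-P_n^*f_j,y\rangle=\langle f_j,y\rangle$. So a suitable $f_j$ exists just because $y\neq0$ and $(f_j)$ is dense; no density statement in the annihilator of $E_n$ is needed.
\item On an even step, reflexivity is not needed. The functional $g=f_j-P_n^*f_j\neq0$ vanishes on $E_n$, so $\langle g,x_k-P_nx_k\rangle=\langle g,x_k\rangle$, which is nonzero for some $k$ by density of $(x_k)$.
\item The bookkeeping is automatic. Each odd (resp.\ even) step absorbs the first $x_k$ (resp.\ $f_j$) not yet in the span. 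Hence after $m$ steps of each parity, $x_1,\dots,x_m$ and $f_1,\dots,f_m$ lie in the spans, whatever the other steps added.
\item When $\dim X=\infty$, the required element always exists, because a finite-dimensional subspace is closed and proper and cannot contain a dense sequence.
\end{enumerate}
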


Finally, we recall  the Fountain Theorem for nonsmooth functionals. 
Let 
$$X_k=\text{span}\{e_k\}, \ Y_k=\overset{k}{\underset{j=1}\bigoplus}X_j,  \ Z_k=\overset{\infty}{\underset{j=k}\bigoplus}X_j.$$
\begin{theorem}(Alves et al. \cite[Theorem $3.8$]{alves1})\label{FT}
    Let $X$ be a Banach space and consider the function $\psi : X \to \mathbb{R}$, which is even and locally Lipschitz. Suppose that the sets $X_k, Y_k, Z_k$ are defined as in the previous sections. Then, there exist constants $R_k > r_k > 0$ such that the following conditions are satisfied
    \begin{enumerate}[label=(\subscript{C}{{\arabic*}})]
        \item\label{C1} as \( k \to \infty \), we have 
        $$\underset{\underset{u \in Z_k}{\|u\| = r_k}}{\inf} \psi(u) \to \infty;$$
        \item\label{C2} for large \( k \), it holds that 
        $$\underset{\underset{u \in Y_k}{\|u\| = R_k}}{\inf} \psi(u) \leq 0;$$
        \item\label{C3} the functional $\psi$ satisfies the $(PS)_c$ condition for all \( c > 0 \).
    \end{enumerate}
    Under these conditions, the function $\psi$ possesses an unbounded sequence of critical values.
\end{theorem}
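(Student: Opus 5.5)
We would prove Theorem \ref{FT} by the standard deformation-and-intersection route, adapted from the smooth Fountain Theorem of Bartsch--Willem to Clarke's subdifferential calculus. Throughout, $X$ is taken to be reflexive and separable, so that Theorem \ref{fab1} provides the biorthogonal system defining $X_k,Y_k,Z_k$, and $\mathrm{codim}\,Z_k=k-1$.

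\emph{Step 1: the minimax values.} Fix $k$ large, let $B_k=\{u\in Y_k:\|u\|\le R_k\}$, and let
\[
\Gamma_k=\{\gamma\in C(B_k,X):\gamma \text{ odd},\ \gamma|_{\partial B_k}=\mathrm{id}\}.
\]
Define $c_k=\inf_{\gamma\in\Gamma_k}\max_{u\in B_k}\psi(\gamma(u))$. Since $\mathrm{id}\in\Gamma_k$ and $B_k$ is compact in the finite-dimensional space $Y_k$, $c_k<\infty$.

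\emph{Step 2: the intersection lemma.} We must show that every $\gamma\in\Gamma_k$ satisfies $\gamma(B_k)\cap\{u\in Z_k:\|u\|=r_k\}\neq\emptyset$. Let $P_{k}$ be the projection onto $Y_{k-1}$ along $Z_k$; it is continuous because the functionals $e_j^*$ are. Set
\[
U=\{u\in B_k^\circ:\|\gamma(u)\|<r_k\}.
\]
This is a bounded, symmetric, open neighbourhood of $0$ in $Y_k$. Since $r_k<R_k$ and $\gamma=\mathrm{id}$ on $\partial B_k$, we have $\overline U\subset B_k^\circ$. The map $P_k\circ\gamma:\partial U\to Y_{k-1}$ is odd and continuous. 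Because $\dim Y_k=k>\dim Y_{k-1}$, the Borsuk--Ulam theorem gives a zero $u\in\partial U$. Then $\gamma(u)\in Z_k$ and $\|\gamma(u)\|=r_k$. Combining this with \ref{C1}, we get $c_k\ge b_k:=\inf_{u\in Z_k,\ \|u\|=r_k}\psi(u)\to\infty$.

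\emph{Step 3: each $c_k$ is a critical value.} This is the main obstacle. Suppose $c_k$ is not critical. Using \ref{C3} together with the lower semicontinuity of $u\mapsto D_\psi(u)$ (which follows from Proposition \ref{res1}), one finds $\varepsilon,\delta>0$ such that
\[
D_\psi(u)\ge\delta \quad\text{whenever } |\psi(u)-c_k|\le 2\varepsilon .
\]
We then need a nonsmooth deformation lemma, as in Chang \cite{chang1} or R\u{a}dulescu \cite{radulescu1}, that is also equivariant. We would build a locally Lipschitz pseudo-gradient field $V$ on the regular set, satisfying $\|V\|\le 1$ and $\langle\xi,V(u)\rangle\ge\delta/2$ for all $\xi\in\partial\psi(u)$. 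The construction uses a partition of unity together with the weak$^*$-compactness and convexity of $\partial\psi(u)$ (Hahn--Banach separation). Because $\psi$ is even, $\partial\psi(-u)=-\partial\psi(u)$, so $V$ can be made odd by replacing it with $\tfrac12\big(V(u)-V(-u)\big)$. The resulting flow $\eta$ is odd and sends $\psi^{c_k+\varepsilon}$ into $\psi^{c_k-\varepsilon}$.

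We also need $\eta$ to fix $\partial B_k$. For this we cut $V$ off so that it vanishes where $\psi\le 0$. By \ref{C2}, $\psi\le 0$ on $\partial B_k$, and $c_k-2\varepsilon>0$ once $k$ is large. Hence for $\gamma\in\Gamma_k$ with $\max\psi\circ\gamma\le c_k+\varepsilon$, the map $\eta\circ\gamma$ again lies in $\Gamma_k$ and satisfies $\max\psi\circ\eta\circ\gamma\le c_k-\varepsilon$. This contradicts the definition of $c_k$, so $c_k$ is critical.

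Since $c_k\ge b_k\to\infty$, the critical values $(c_k)$ form an unbounded sequence, which proves the theorem.
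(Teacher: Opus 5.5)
The paper does not prove this theorem; it is quoted from Alves et al., and your outline is the standard proof of such a nonsmooth Fountain Theorem. That outline is a Borsuk--Ulam intersection lemma giving $c_k\ge b_k$, followed by an odd, locally Lipschitz Clarke pseudo-gradient flow that is stationary on $\{\psi\le c_k-2\varepsilon\}$. The individual steps hold up: lower semicontinuity of $D_\psi$ combined with $(\mathrm{PS})_{c_k}$ for $c_k>0$, the symmetrization via $\partial\psi(-u)=-\partial\psi(u)$, and oddness of the flow by uniqueness. One small point to make explicit: the cut-off must be even, for instance built from distances to the symmetric sets $\{|\psi-c_k|\le\varepsilon\}$ and $\{|\psi-c_k|\ge 2\varepsilon\}$, so that the truncated field remains odd.

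The substantive issue is the sentence ``By $(\mathrm{C}_2)$, $\psi\le 0$ on $\partial B_k$.'' That needs $\max_{u\in Y_k,\ \|u\|=R_k}\psi(u)\le 0$. Condition $(\mathrm{C}_2)$ as printed only says the \emph{infimum} over that sphere is $\le 0$, which does not give what you use. This cannot be fixed inside your argument, because with $\inf$ the statement is false. Take $X=\ell^2$ with the standard basis, $\psi(u)=\tfrac12\|u\|^2-\tfrac14 u_1^4$, $r_k=k$ and $R_k=k+1$.
\begin{itemize}
\item For $k\ge 2$ we have $u_1=0$ on $Z_k$, so the infimum in $(\mathrm{C}_1)$ equals $k^2/2\to\infty$.
\item $\psi(R_k e_1)<0$, so $(\mathrm{C}_2)$ holds in its $\inf$ form.
\item $\psi'(u)=u-u_1^3e_1$, so $\psi'(u_n)\to 0$ forces the tail of $u_n$ to vanish and $(u_n)_1$ to stay bounded. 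Hence $(\mathrm{PS})_c$ holds for every $c$.
\item Yet the only critical points are $0$ and $\pm e_1$, with critical values $0$ and $\tfrac14$.
\end{itemize}
So you should say explicitly that you are proving the theorem with $(\mathrm{C}_2)$ read as $\max_{u\in Y_k,\ \|u\|=R_k}\psi(u)\le 0$. This is the hypothesis of the standard Fountain Theorem, and it is also what the paper actually verifies when it applies the result in the proof of Theorem~\ref{multiplicity}. Under that reading your proof is complete.
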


\section{Proofs of Main Results}\label{proof}

\begin{proof}[Proof of Theorem \ref{multiplicity}]
    We begin by noting that condition \ref{C3} is satisfied according to Proposition \ref{PS_Cond}. From \eqref{H3}, we can derive the following inequality:
    $$I(u) \leq \frac{1}{2} \|u\|^2 - \frac{1}{q} \|u\|_q^q + \|\gamma\|_2 \|Tr(u)\|_2 \leq \frac{1}{p} \|u\|^p - \frac{1}{q} \|u\|_q^q + c \|\gamma\|_2 \|u\|.$$

    In any finite-dimensional space, all norms are equivalent. By selecting $\|u\| = \rho_k$ to be sufficiently large, we obtain $I(u) \leq 0$, thus confirming that condition \ref{C2} holds.

    Let us define 
    $$a_k = \underset{\underset{\|u\| = 1}{u \in Z_k}}{\sup} \|u\|_2.$$ 
    It is evident that $a_k \to 0,$ as \( k \to \infty \).

    Moreover, we have
    $$I(u) \geq \frac{1}{2} \|u\|^2 - \frac{1}{q} \|u\|_q^q - c \|\gamma\|_2 \|u\| \geq \frac{1}{2} \|u\|^2 - \frac{1}{q} c' a_k \|u\|^q - c \|\gamma\|_2 \|u\|.$$

    By choosing 
    $$r_k = (c' a_k)^{\frac{1}{2-q}}, \quad \text{for each } u \in Z_k, \ \|u\| = r_k,$$
    we arrive at:
    $$I(u) \geq \left(\frac{1}{2} - \frac{1}{q}\right)(c' a_k)^{\frac{2}{2-q}} - c \|\gamma\|_2 \|u\| \to \infty.$$
    
    This confirms that condition \ref{C1} is satisfied as well. Therefore, we conclude that there exists an unbounded sequence of critical values, implying that the problem 
    \eqref{main} indeed has infinitely many solutions.
\end{proof}

		\begin{proof}[Proof of Theorem \ref{unique}]
			Let $u_1, u_2$ be any two solutions
			of problem \eqref{main},
			 i.e. let $0\in\partial I(u_1), \partial I(u_2)$. Then we have 
			\begin{align}\label{unique1}
				\begin{split}
					0=& \int_{\Omega}|\nabla u_1|^{p-2}\nabla u_1\cdot\nabla v dx-\int_{[u_1\neq\alpha]}\left(-H(u_1-b)u_1^{q-1}\right)v dx-\int_{[u_1=b]}\theta_1 v dx+\int_{\partial\Omega}\eta_1 v dS\\
					0=&  \int_{\Omega}|\nabla u_2|^{p-2}\nabla u_2\cdot\nabla v dx-\int_{[u_2\neq b]}\left(-H(u_2-b)u_2^{q-1}\right) v dx-\int_{[u_2=b]}\theta_2 v dx+\int_{\partial\Omega}\eta_2 v dS.
				\end{split}
			\end{align}
			Here, $\theta_1, \theta_2\in [\underline{g}(u),\bar{g}(u)],$ as given in Lemma \ref{subderivative}.
			On subtracting the first equation from the second one in \eqref{unique1} and taking $\phi:=u_1-u_2,$ we obtain
			\begin{align}\label{unique2}
				\begin{split}
					0=& \int_{\Omega}(|\nabla u_1|^{p-2}\nabla u_1-|\nabla u_2|^{p-2}\nabla u_2)\cdot\nabla(u_1-u_2) dx\\
					&+\int_{{[u_1\neq b, u_2\neq b]}}(H(u_1-b)u_1^{q-1}-H(u_2-b)u_2^{q-1})(u_1-u_2) dx+\int_{\partial\Omega}(\eta_1-\eta_2)(u_1-u_2) dS\\
					\geq & C\int_{\Omega}|\nabla(u_1-u_2)|^{p} dx+\int_{[u_1\neq\alpha, u_2\neq b]}\left(H(u_1-b)u_1^{q-1}-H(u_2-b)u_2^{q-1}\right)(u_1-u_2) dx\\
					&+\int_{\partial\Omega}(\eta_1-\eta_2)(u_1-u_2)dS.
				\end{split}
			\end{align}							
			We now have  to consider the following four cases:
%			\begin{enumerate}\label{cases_to_discuss}
				(a)~$u_1>b$, $u_2>b$, (b)~$u_1>b$, $u_2<b$, (c)~$u_1<b$, (d)~$u_2<b$, $u_1<b$, $u_2>b$.
%			\end{enumerate}			
			Therefore we have 
			\[ \int_{\Omega}(H(u_1-b)u_1^{q-1}-H(u_2-b)u_2^{q-1})(u_1-u_2) dx\begin{cases} 
				\geq C_q\|u_1-u_2\|_{L^q(\Omega)}^q, & \text{if}~u_1>b, u_2>b \\
				=0, & \text{if}~ u_1<b, u_2<b\\
				=-\int_{\Omega}u_2^{q-1}(u_1-u_2)dx, & \text{if}~u_1<b, u_2>\alpha\\
				=\int_{\Omega}u_1^{q-1}(u_1-u_2)dx,	& \text{if}~u_1>b, u_2<b
			\end{cases}
			\]
			which is a positive term, so by invoking \eqref{unique2}, we obtain
			\begin{align}\label{estq}
				\begin{split}
					C\|\eta_1-\eta_2\|_{p'}^{1/p}\geq &\|u_1-u_2\|.
				\end{split}
			\end{align}
			This gives an estimate which proves that the problem is indeed well-defined.
			 Furthermore, if $\partial f(u)=\{v\}$, then the solution is unique.
		\end{proof}

\section{An Example}\label{example}

 We begin by considering the  functional energy associated with the problem \eqref{main_singular}:
 \begin{align}\label{funct_sing}
 I_S(u) = \frac{1}{p} \int_{\Omega} |\nabla u|^p  dx - \int_{\Omega} G(u)  dx + \int_{\partial\Omega} f(\mathrm{Tr}(u))  dS - \frac{\lambda}{1-\gamma} \int_{\Omega} |u|^{1-d}  dx.
 \end{align}
 
 The first difficulty is that the singular term $|u|^{1-d}$ prevents $I_S$ from being Fréchet differentiable, even when the other nonlinearities are smooth. To overcome this, we introduce a truncation. Let $\underline{u}$ be a subsolution of the problem \eqref{main_singular}. We define the truncated function
$$
 \tilde h(x, u(x)) = 
 \begin{cases} 
     |u(x)|^{-d-1}u(x), & \text{if } |u(x)| > \underline{u}(x), \\
     \underline{u}(x)^{-d}, & \text{if } |u(x)| \leq \underline{u}(x).
 \end{cases}
$$
 This modification yields a functional with improved regularity, allowing us to apply nonsmooth variational methods.
 
 We now establish a key property: any positive solution $u$ of \eqref{main_singular} satisfies $u \geq \underline{u}$ almost everywhere in $\Omega$. This implies that the critical set of the singular functional, $\mathcal{C}_{I_S}$, is contained within the critical set of the regularized functional, $\mathcal{C}_I$. We follow the arguments in Ghosh et al. \cite[Lemma $3$]{ghosh1}.
 
 To prove this, let $u > 0$ be a solution of \eqref{main_singular} and define the set $A = \{x \in \Omega : u(x) \leq \underline{u}(x)\}$. Consider the difference between the weak formulations of \eqref{main_singular} and the equation for the subsolution $\underline{u}$. Testing this difference with $\phi = (u - \underline{u})_-$ (the negative part) yields:
 \begin{equation}\label{dj1}
 \begin{split}
 0 &\geq \left\langle -\Delta_p u + \Delta_p \underline{u}, (u - \underline{u})_- \right\rangle \\
   &= \lambda \int_{\Omega} \left( u^{-d} - \underline{u}^{-d} \right) (u - \underline{u})_-  dx + \int_{\Omega} u^{q-1} (u - \underline{u})_-  dx \geq 0.
 \end{split}
 \end{equation}
 The first inequality follows from the monotonicity of the $p$-Laplacian operator. The final integrand is non-positive on $A$ because $u \leq \underline{u}$ implies $u^{-d} \geq \underline{u}^{-d}$ and $(u - \underline{u})_- \leq 0$. The second integral is also non-positive on $A$. Consequently, both integrals must be zero, forcing $|A| = 0$. Therefore, $u \geq \underline{u}$ almost everywhere in $\Omega$.
		
		\begin{lemma}\label{u_greater_u_lambda}
        For sufficiently small $\lambda > 0$, every positive solution $u$ of problem \eqref{main_singular} that satisfies $u \geq \underline{u}$ must also satisfy $u > \underline{u}$ almost everywhere in $\Omega$.
		\end{lemma}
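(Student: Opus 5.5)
We argue by contradiction and compare $u$ with $\underline{u}$ on the coincidence set. Fix $\lambda>0$ small. Let $u$ be a positive solution of \eqref{main_singular} with $u\ge \underline{u}$ a.e., and suppose the set $B:=\{x\in\Omega: u(x)=\underline{u}(x)\}$ has positive measure. We take $\underline{u}$ to be the subsolution built from the purely singular problem $-\Delta_p \underline{u}=\lambda \underline{u}^{-\gamma}$ (with the same Neumann data). This is the subsolution already used in the comparison argument preceding the lemma, following Ghosh et al. \cite[Lemma $3$]{ghosh1}. For $\lambda$ small, $\underline{u}$ is small in $L^\infty$; in particular we may arrange $\|\underline{u}\|_\infty<b$. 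Then $H(\underline{u}-b)=0$, and on $B$ we also have $H(u-b)=0$.

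\textbf{Step 1: compare the two equations.} We subtract the weak formulation for $\underline{u}$ from that for $u$. Since $u\ge\underline{u}$ and the boundary terms are handled as in Lemma \ref{subderivative}, using the subdifferential of $f$ and hypothesis \ref{H2}, the difference $w:=u-\underline{u}\ge 0$ satisfies, in the distributional sense,
\[
-\Delta_p u+\Delta_p \underline{u}= H(u-b)u^{q-1}+\lambda\bigl(u^{-\gamma}-\underline{u}^{-\gamma}\bigr)+\mu,
\]
where $\mu\ge 0$ is the defect measure coming from $\underline{u}$ being a strict subsolution. On $B$ the singular terms cancel, and the Heaviside term vanishes by the choice $\|\underline{u}\|_\infty<b$. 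So near $B$ the right-hand side reduces to the nonnegative measure $\mu$.

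\textbf{Step 2: strong comparison.} We then apply a strong comparison principle for the $p$-Laplacian (Vázquez / Cuesta--Takáč type) to $u\ge\underline{u}$. Since the right-hand side is nonnegative and not identically zero, $w$ cannot vanish on a set of positive measure unless $w\equiv 0$. Indeed, by the weak Harnack inequality applied to the linearized operator (which is uniformly elliptic where $\nabla u,\nabla\underline{u}$ do not vanish simultaneously, using $p\ge 2$ and $C^{1,\alpha}$ regularity of both functions), $\inf w>0$ on compact subsets once $w\not\equiv 0$. The case $w\equiv0$ is excluded because $\underline{u}$ is a strict subsolution: testing with a positive function gives a contradiction via the positive term $\mu$, or via the extra nonnegative term $H(u-b)u^{q-1}$ where $u>b$. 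Hence $|B|=0$, that is, $u>\underline{u}$ a.e.

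\textbf{Main obstacle.} The hard step is justifying the strong comparison, which needs $C^{1,\alpha}$ regularity of $u$ and $\underline{u}$ up to the boundary despite both the singular term and the nonsmooth Neumann data. If that regularity is unavailable, we would use a weaker, measure-theoretic substitute. Testing with $\phi=(\underline{u}+\varepsilon-u)_+$ and letting $\varepsilon\to0$ gives
\[
\int_B \lambda(u^{-\gamma}-\underline{u}^{-\gamma})\,dx=0,
\]
while the strict subsolution inequality contributes a strictly positive quantity $\int_B d\mu>0$ when $|B|>0$. The smallness of $\lambda$ is exactly what guarantees both $\|\underline{u}\|_\infty<b$ and that this defect is strictly positive.
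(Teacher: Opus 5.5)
There is a genuine gap, and it sits exactly where strictness is needed. You take $\underline{u}$ to solve $-\Delta_p\underline{u}=\lambda\underline{u}^{-\gamma}$ with the same Neumann inclusion and $\|\underline{u}\|_\infty<b$. Then $H(\underline{u}-b)=0$ a.e., so $\underline{u}$ is itself a positive solution of \eqref{main_singular}. Taking $u:=\underline{u}$ gives an admissible solution with $B=\Omega$, so with this $\underline{u}$ the statement you are proving is false. The smallness of $\lambda$, which you use to get $\|\underline{u}\|_\infty<b$, is precisely what turns $\underline{u}$ into a solution.

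Inside your argument the failure looks like this. $\underline{u}$ is not a strict subsolution, so your defect measure is $\mu=0$. On $B$ the singular terms cancel and $H(u-b)u^{q-1}=0$, so nothing strictly positive survives on the coincidence set. Hence the exclusion of $w\equiv0$ and the fallback claim $\int_B d\mu>0$ have no basis. The identity $\int_B\lambda(u^{-\gamma}-\underline{u}^{-\gamma})\,dx=0$ holds trivially because $u=\underline{u}$ on $B$, so it carries no information. The paper's proof instead tests the difference of the two weak formulations with a function $T$ supported in $B$ and equal to $1$ on a closed $F\subset B$. Its contradiction comes from the leftover term $\lambda\int_F u^{-\gamma}\,dx+\lambda\int_{B\setminus F}u^{-\gamma}T\,dx>0$, i.e.\ from the singular term \emph{not} being matched by the $\underline{u}$-relation on $B$. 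Your choice of $\underline{u}$ cancels exactly that term.

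The strong-comparison step is not available either, for three reasons. The right-hand side is not nonnegative, since $\lambda(u^{-\gamma}-\underline{u}^{-\gamma})\le0$ wherever $u\ge\underline{u}$. The linearization of $-\Delta_p$ degenerates where $\nabla u$ and $\nabla\underline{u}$ both vanish, which is why Cuesta--Tak\'a\v{c}-type results need extra hypotheses. And you have neither $C^{1,\alpha}$ regularity up to $\partial\Omega$ nor a sign for the boundary terms; the latter would need monotonicity of $\partial f$, i.e.\ convexity of $f$, which is not assumed.

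What does work is a careful version of your fallback, which is also the rigorous form of the paper's idea. Take $\underline{u}$ strict pointwise, e.g.\ $-\Delta_p\underline{u}\le\lambda\underline{u}^{-\gamma}-\delta$ weakly in $\Omega$ with $\delta\in L^1_{\mathrm{loc}}(\Omega)$, $\delta>0$ a.e., and $\underline{u}^{-\gamma}\in L^1_{\mathrm{loc}}(\Omega)$. Test the difference of the two relations with $\phi_\varepsilon=\min\{1,(\underline{u}+\varepsilon-u)^+/\varepsilon\}\,\psi$, where $0\le\psi\in C^1_c(\Omega)$, and let $\varepsilon\to0$:
\begin{itemize}
\item the compact support of $\psi$ removes all boundary terms;
\item monotonicity of $\xi\mapsto|\xi|^{p-2}\xi$ makes the part containing $\nabla(u-\underline{u})$ nonpositive;
\item the part containing $\nabla\psi$ tends to $\int_B(|\nabla u|^{p-2}\nabla u-|\nabla\underline{u}|^{p-2}\nabla\underline{u})\cdot\nabla\psi\,dx=0$, because $\nabla u=\nabla\underline{u}$ a.e.\ on $B$;
\item the right-hand side tends to at least $\int_B\delta\psi\,dx$, since the Heaviside selection is nonnegative.
\end{itemize}
Hence $\int_B\delta\psi\,dx\le0$ for every such $\psi$, so $|B|=0$. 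The missing idea is strictness of the subsolution on the coincidence set, which your construction of $\underline{u}$ rules out.
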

		\begin{proof}
			Apparently, 
			$\underline{u}$ is a subsolution of \eqref{main_singular}. therefore, any solution $u>0$ of \eqref{main_singular} obeys $u\geq \underline{u}$ a.e. in $\Omega$. 
			Let 
			$$B=\{x\in\Omega:u(x)=\underline{u}(x)\}.$$ 
		$B$ being a measurable set can be approximated with closed subsets of $B$ such that for every $\delta>0,$ there exists a closed subset $F\subset B$ such that $|B\setminus F|<\delta$. We now claim that $|B|=0$. For if not, let a test function $T\in C_c^1(\Omega)$ be defined as 
			\begin{equation}T(x)=\begin{cases}
					1,& ~\text{if}~ x\in F\\
					0<T<1,&~\text{if}~x\in B\setminus F\\
					0,&~\text{if}~x\in \Omega\setminus B.
			\end{cases}\end{equation}
			Since $u$ is a weak solution of problem \eqref{main_singular}, there exist $$\theta_s, \theta_{ns}\in [\underline{g}(u),\overline{g}(u)], \\~  v_s, v_{ns}\in\partial f(x)$$
			 such that  
			\begin{align}\label{equality_breakage}
				\begin{split}
					0=&  \langle -\Delta_pu,T\rangle-\langle -\Delta_p\underline{u},T\rangle-\int_{F\cap\{u=b\}}\theta_{ns}T dS-\int_{F\cap\{u>b\}}u^{q-1}dx\\
					&+\int_{B\setminus F\cap\{u=b\}}\theta_s t dS+\int_{B\setminus F\cap\{u>b\}}u^{q-1}t dx-\int_{\partial\Omega}v_{ns}T dS+\int_{\partial \Omega}v_s T dS\\
					&+\lambda\int_{F}u^{-\gamma}dx+\lambda\int_{B\setminus F}u^{-\gamma} T dx
					=\lambda\int_{F}u^{-\gamma}dx+\lambda\int_{B\setminus F}u^{-\gamma} T dx>0,
				\end{split}
			\end{align}
			thereby leading to an absurdity. Hence $|B|=0$ and thus $u>\underline{u}$ a.e. in $\Omega$.
		\end{proof}

\begin{proof}[Proof of Theorem \ref{existence}]
The remaining part of the proof follows from an identical approach as in the proof of Theorem \ref{multiplicity}.
\end{proof}

\vskip 0.5cm

\subsection*{Acknowledgements}  Part of this research was done during the visit by the first author to the Institute of Mathematics, Physics and Mechanics in Ljubljana, supported by the Slovenian Research and Innovation Agency grant P1-0292.   The authors thank all referees for their constructive comments. The first author also thanks Dr. S.R. Pattanaik, National Institute of Technology Rourkela, for the useful discussions on nonsmooth analysis.
 
\subsection*{Research Funding}   The first author was supported by the N.B.H.M., Department of Atomic Energy (DAE) India, [02011/47/2021/NBHM(R.P.)/R \& D II/2615]. The second author was supported by the Slovenian Research and Innovation Agency program P1-0292 and grants J1-4001, J1-4031, and N1-0278.

\end{document}